\newtheorem{assumption}{Assumption}
\newtheorem{definition}{Definition}
\title{Efficient constrained sampling via the mirror-Langevin algorithm}
\numberwithin{equation}{section}
\theoremstyle{plain}
\newtheorem{theorem}{Theorem} 
\newtheorem{lemma}{Lemma} 
\newtheorem{proposition}{Proposition} 
\newtheorem{corollary}{Corollary}  
\theoremstyle{definition} 
\newtheorem{remark}{Remark}
\definecolor{darkblue}{rgb}{0.0, 0.0, 0.55}
\newcommand*{\addFileDependency}[1]{
  \typeout{(#1)}
  \@addtofilelist{#1}
  \IfFileExists{#1}{}{\typeout{No file #1.}}
}
\author{%
    Kwangjun Ahn \\
  Department of EECS\\
  Massachusetts Institute of Technology \\
  Cambridge, MA 02139 \\
  \texttt{kjahn@mit.edu}
  \And
  Sinho Chewi \\
  Department of Mathematics\\
  Massachusetts Institute of Technology \\
  Cambridge, MA 02139 \\
  \texttt{schewi@mit.edu}
}
\begin{document}

\maketitle

\begin{abstract}
 We propose a new discretization of the mirror-Langevin diffusion and give a crisp proof of its convergence.
Our analysis uses relative convexity/smoothness and self-concordance, ideas which originated in convex optimization, together with a new result in optimal transport that generalizes the displacement convexity of the entropy.
Unlike prior works, our result both (1) requires much weaker assumptions on the mirror map and the target distribution, and (2) has vanishing bias as the step size tends to zero. In particular, for the task of sampling from a log-concave distribution supported on a compact set, our theoretical results are significantly better than the existing guarantees. 
\end{abstract}

\section{Introduction}

We consider the following canonical sampling problem.
Let $V : \R^d\to \R\cup\{\infty\}$ be a convex function and let $\pi$ be the density on $\R^d$ which is proportional to $\exp(-V)$.
The task is to output a sample which is (approximately) distributed according to $\pi$, given query access to the gradients of $V$.

The sampling problem has attracted considerable attention recently within the machine learning and statistics communities. This renewed interest in sampling is spurred, on one hand, by a wide breadth of applications ranging from Bayesian inference~\citep{RobCas04, durmus2019high} and its use in inverse problems~\citep{dashtistuart2017inverse}, to neural networks~\citep{Gans14, tzenraginsky2020lazytraining}. On the other hand, there is a deep and fruitful connection between sampling and the field of \emph{optimization}, introduced in the seminal work~\cite{jordan1998variational}, which has resulted in the rapid development of sampling algorithms inspired by optimization methods such as: proximal/splitting methods~\citep{Ber18, Wib18, Wib19prox,salim2020proximal}, coordinate descent~\cite{dingetal2020coordinatelangevin, dingetal2021coordinateunderdamped}, mirror descent~\citep{hsieh2018mirrored, chewietal2020mirrorlangevin, zhangetal2020wassersteinmirror}, Nesterov's accelerated gradient descent~\citep{cheng2017underdamped, ma2019there, DalRiou2020}, and Newton methods~\citep{martin2012newtonmcmc, simsekli2016quasinewtonlangevin, chewietal2020mirrorlangevin, wang2020informationnewton}.

To describe this connection, we recall the \emph{Langevin diffusion}, which is the solution to the following stochastic differential equation (SDE):
\begin{align}\tag{$\msf{LD}$}\label{eq:langevin}
    \D X_t
    &= -\nabla V(X_t) \, \D t + \sqrt 2 \, \D B_t.
\end{align}
Under standard assumptions on the potential $V$, the SDE is well-posed and it converges in distribution, as $t\to\infty$, to its unique stationary distribution $\pi$. Thus, once suitably discretized, it yields a popular algorithm for the sampling problem.
The Langevin diffusion is classically studied using techniques from Markov semigroup theory~\citep[see, e.g.][]{bakry2014markov, pavliotis2014stochastic}, but there is a more insightful perspective which views the diffusion~\eqref{eq:langevin} through the lens of optimization~\citep{jordan1998variational}. Specifically, if $\mu_t$ denotes the law of the process~\eqref{eq:langevin} at time $t$, then the curve ${(\mu_t)}_{t\ge 0}$ is the \emph{gradient flow} of the KL divergence $\kl{\cdot}{\pi}$ in the Wasserstein space of probability measures. This perspective has not only inspired new analyses of Langevin~\citep{ cheng2018langevin, Wib18, durmus2019analysis, vempala2019langevin}, but has also emboldened the possibility of bringing to bear the extensive toolkit of optimization onto the problem of sampling; see the references listed above.

However, the vanilla Langevin diffusion notably fails when the support of the target distribution $\pi$ is not all of $\R^d$. This task of \emph{constrained sampling}, named in analogy to constrained optimization, arises in applications such as latent Dirichlet allocation~\citep{blei2003latent}, ordinal data models~\citep{johnson2006ordinal}, survival time analysis~\citep{klein2006survival}, regularized regression~\citep{celeux2012regularization}, and Bayesian matrix factorization~\citep{paisley2014bayesian}.
Despite such a broad range of applications, the constrained sampling problem has proven to be challenging.
In particular, most prior works have focused on domain-specific algorithms~\citep{gelfand1992bayesian,pakman2014exact,lan2016sampling}, and the first general-purpose algorithms for this task are recent~\citep{brosse17a,bubeck2018sampling}.

In this work, we tackle  the constrained sampling problem via  the \emph{mirror-Langevin algorithm} \eqref{mla}.
\ref{mla} is a discretization of the mirror-Langevin diffusion~\citep{hsieh2018mirrored, zhangetal2020wassersteinmirror}, which is the sampling analogue of \emph{mirror descent}.
Namely, if $\phi : \R^d\to \R\cup\{\infty\}$ is a mirror map, then the mirror-Langevin diffusion is the solution to the SDE
\begin{align}
	  X_t=\nabla \phi^\star (Y_t),\quad \quad \D Y_t = -\nabla V(X_t) \, \D t +\sqrt{2} \, [\nabla^2\phi(X_t)]^{1/2} \, \D B_t\,. \tag{$\msf{MLD}$} \label{eq:mld}
\end{align}
 
 \noindent{\bf Technical motivation. }   Recently,  Zhang et al.~\cite{zhangetal2020wassersteinmirror} analyze an Euler-Maruyama discretization of \ref{eq:mld}; see \eqref{eq:euler_maruyama} for details. 
The most curious aspect of their result is that their convergence guarantee has a bias term that does not vanish even when step size tends to zero and the number of iterations tends to infinity.
Moreover, they conjecture that this bias term is unavoidable. This is in contrast to known results for standard Langevin, which raises the main question of this paper:
\begin{center}
\emph{Can a different discretization of \ref{eq:mld} lead to a vanishing bias?}
\end{center}

\begin{figure}
\caption{Illustration of the mirror Langevin algorithm~\eqref{mla}.
This illustration is adapted from~\cite[Figure 4.1]{bubeck2015convex}.
}
\label{fig:MLA}
\centering
\begin{tikzpicture}[scale=3.5]
\clip (-2.4,-0.7) rectangle (1,1);
\draw[rotate=30, very thick] (0,-0.5) ellipse (0.7 and 1);
\draw[very thick] (-2,0) ellipse (1.1 and 0.56);

\node at (0.6,0.55) {\large $\dom(\phi)$};
\node at (-1.9,0.7) {\large $\dom(\phi^\star)$}; 
\node [tokens=1] (Xt) at (-0.1,0.4) [label=right:{$X_k$}] {};
\node [tokens=1] (Xt1) at (-0.3,-0.1) [label=right:{$X_{k+1/2}$}] {};

\node [tokens=1] (Xt2) at (-0.15,-0.6) [label=right:{$X_{k+1}$}] {};

\draw[->, thick] (Xt) -- (Xt1) node[midway, right] {\begin{tabular}{c} \\ mirror descent \\ \eqref{mla1} \end{tabular}};

\draw[->, thick] (Xt1) -- (Xt2) node[midway, right] {\begin{tabular}{c}  mirror diffusion \\ \eqref{mla2}\end{tabular}};

\node [tokens=1] (Pt) at (-2.2,0.4) [label=below:{$\nabla \phi(X_k)$}] {};

\node [tokens=1] (Pt1) at (-1.15,0.1) [label={[xshift=0.3cm, yshift=-0.65cm]{ $\bx_0$}}] {};

\node [tokens=1] (W1) at (-1.7,-0.35) [label=below:{$\bx_\st$}] {};

\draw[->, thick] (Pt) -- (Pt1) node[pos=0.5, sloped, above] {\scriptsize$-\st\nabla V(X_k)$};

\draw[->, semithick] (Xt) .. controls (-1,0.6) and (-1.4, 0.55) .. (Pt) node[midway, above] {$\nabla \phi$}; 
\draw[->, semithick,dashed] (Pt1) .. controls (-0.9,0.12) and (-0.6, 0.09) .. (Xt1) node[midway, above] {$\nabla \phi^\star$};  
\draw[->, semithick] (W1) .. controls (-1.3,-0.5) and (-0.7, -0.6) .. (Xt2) node[midway, above] {$\nabla \phi^\star$};

\draw  {decorate[decoration={random steps, segment length=0.5,amplitude=0.5}]
  {decorate[decoration={random steps, segment length=1,amplitude=1}]
  {decorate[decoration={random steps, segment length=8,amplitude=8}]     
  { (Pt1) to[out=235,in=90] (-1.7,0) 
  to[out=180,in=0] (-2,0)
to[out=-90,in=135] (W1) }}}};

\node (W) at (-2,0) [label={[xshift=-0.4cm, yshift=-0.6cm]{\scriptsize $\bx_t$}}] {};
 
\end{tikzpicture}

\end{figure}

\medskip{}
\textbf{Our contributions}. We propose a new discretization of the mirror-Langevin diffusion, given in~\eqref{mla} and illustrated in Figure~\ref{fig:MLA}. 
Our proposed discretization has the same cost as the standard Euler-Maruyama discretization of~\ref{eq:mld} in terms of the number of queries to the gradient oracle for $V$.  
We remark that our scheme for the case $\phi =\nicefrac{\norm{\cdot}^2}{2}$ recovers the unadjusted Langevin algorithm.
The most important aspect of our result is that the bias of our algorithm vanishes as the step size tends to zero unlike the result by Zhang et al.~\cite{zhangetal2020wassersteinmirror}. 

By adapting the analysis of Durmus et al.~\cite{durmus2019analysis}, we provide a clean convergence analysis of our algorithm which theoretically validates our discretization scheme.
Notably, our analysis only requires standard assumptions/definitions which are well-studied in optimization. 
In particular, we establish a stronger link between sampling and optimization without relying on   technical assumptions of Zhang et al.~\cite{zhangetal2020wassersteinmirror} (e.g.\ commutation conditions for Hessians; see (A5) therein).

Moreover, our analysis combines ideas from optimization with the calculus of optimal transport. In particular, 
we establish a new generalization of a celebrated fact, namely that the entropy functional is \emph{displacement convex} along Wasserstein geodesics, to the setting of Bregman divergences (Theorem~\ref{thm:ent_cvx_bregman}).
This inequality has interesting consequences in its own right; as we discuss in Corollary~\ref{cor:transport},  our result already implies the transport inequality of Cordero-Erausquin~\cite{cordero2017transport}.
 
  We provide convergence guarantees for the following classes of potentials: (1) convex and relatively smooth (Theorem~\ref{thm:w}); (2) strongly relatively convex and relatively smooth (Theorem~\ref{thm:s}); and (3) convex and Lipschitz (Theorem~\ref{thm:w:n}).
Our results largely match state-of-the-art results for the discretization of the Langevin algorithm for unconstrained sampling.
Our work paves the way for the practical deployment of mirror-Langevin methods for sampling applications, paralleling the successes of mirror descent in optimization~\citep{nemirovsky1983problem,juditsky2011first,bubeck2015convex}.

In Section~\ref{scn:bayesian_logistic}, we demonstrate the strength of our convergence guarantees compared with the previous works~\cite{brosse17a,bubeck2018sampling} in an application to Bayesian logistic regression; further applications are given in Appendix~\ref{sec:further_appl}.
We also corroborate our theoretical findings with numerical experiments.

\textbf{Other related works.} 
Recently, a few works have proposed modifications of the Langevin algorithm for the task of constrained sampling.
Bubeck et al.~\cite{bubeck2018sampling} studied the \emph{projected Langevin algorithm} (PLA), which simply projects each step of the Langevin algorithm onto $\dom(V)$.
A different approach was taken in Brosse et al.~\cite{brosse17a}, which applies the Langevin algorithm to a smooth approximation of $V$ given by the Moreau-Yosida envelope.
The latter approach was later interpreted and further analyzed by Salim and Richtarik~\cite{salim2020primal} using the primal-dual optimality framework from convex optimization.

A different line of work, more closely related to ours, uses a mirror map to change the \emph{geometry} of the sampling problem~\citep{hsieh2018mirrored,chewietal2020mirrorlangevin, zhangetal2020wassersteinmirror}.
 In particular, the mirror-Langevin diffusion~\eqref{eq:mld}  was first introduced in an earlier draft of~\cite{hsieh2018mirrored}, as well as in~\cite{zhangetal2020wassersteinmirror}. 
The diffusion was further studied in~\cite{chewietal2020mirrorlangevin}, which provided a simple convergence analysis in continuous time using the sampling analog of \emph{Polyak-\L{}ojasiewicz inequalities}~\citep{karimi2016linear}.
We also remark that the idea of changing the geometry via a mirror map also played an crucial role for the problem of sampling from the uniform distribution over a polytope~\citep{kannan2012random,lee2017geodesic,chen2018fast,lee2018convergence,gustafson2020johns,laddha2020strong}.

Lastly, our work follows the trend of applying ideas from optimization to the task of sampling.
Specifically, our analysis adopts the framework of relative convexity and smoothness, which was advocated as a more flexible framework for optimization in~\cite{bauschkebolteteboulle2017descentlemma, lufreundnesterov2018relativelysmooth}.

\section{The mirror-Langevin algorithm}

\subsection{Background}\label{scn:background}

In this section, we list basic definitions and assumptions that we employ in this work.

\medskip{}
\underline{\textbf{Convex functions of Legendre type}.}
Throughout, we assume familiarity with the basic notions of convex analysis~\citep[see e.g.][]{rockafellar1970convex, borwein2006convex}.

\begin{definition}[{Convex functions of Legendre type~\citep[\S 26]{rockafellar1970convex}}] \label{defn:legendre}  
A proper convex lower semicontinuous function $\phi: \R^d \to \R\cup\{\infty\}$ is \emph{of Legendre type} if
\begin{enumerate}[label= (\roman*)]
    \item $\sQ:=\inte(\dom(\phi))\neq \emptyset$, 
    \item $\phi$ is strictly convex and differentiable on $\sQ$, and 
    \item $\lim_{k\to\infty}\norm{\nabla \phi(x_k)} = \infty$ whenever ${\{x_k\}}_{k\in\N}$ is a sequence in $\sQ$ converging to $\partial \sQ$.
\end{enumerate}
\end{definition}
The key properties of convex functions of Legendre type are listed below:
\begin{itemize}
    \item The subdifferential $\partial \phi$ is single-valued and hence $\partial \phi = \{\nabla \phi \}$~\citep[Theorem 26.1]{rockafellar1970convex}.
    \item  $\phi$ is a convex function of Legendre type if and only if its Fenchel conjugate $\phi^\star$ is a convex function of Legendre type~\citep[Theorem 26.5]{rockafellar1970convex}.
    \item The gradient $\nabla \phi$ forms a bijection between $\inte(\dom (\phi))$ and $\inte( \dom(\phi^\star))$ with $\nabla \phi^\star = (\nabla \phi )^{-1}$~\citep[Theorem 26.5]{rockafellar1970convex}.
\end{itemize} 
We refer readers to \cite[\S 26]{rockafellar1970convex} for more details. We henceforth assume that our mirror map $\phi$ is a convex function of Legendre type.

The natural notion of ``distance'' associated with the mirror map $\phi$ is given by the Bregman divergence~\citep[see, e.g.][\S 4]{bubeck2015convex}:

\begin{definition}[Bregman divergence~\citep{bregman1967relaxation}]
For a convex function  $\phi$  of Legendre type, the Bregman divergence $\breg{\cdot}{\cdot}$ associated to $\phi$ is defined as
\begin{align*}
    \breg{x}{y}:= \phi(x)-\phi(y) -\inp{\nabla \phi(y)}{x-y},\qquad \forall x,y\in \sQ:=\inte(\dom(\phi))\,.
\end{align*}
\end{definition} 

The Bregman divergence behaves like a squared distance; indeed, as $x\to y$ a Taylor expansion shows that $D_\phi(x,y) \sim \frac{1}{2} \langle x-y, \nabla^2 \phi(y) (x-y)\rangle$. We refer to~\citep[\S 4]{bubeck2015convex} for other basic properties of the Bregman divergence.
Hereinafter, when we use a Bregman divergence, we implicitly assume that its associated function $\phi$ is a mirror map.

An important case to keep in mind is the mirror map $\phi = \frac{\norm \cdot^2}{2}$, where $\norm \cdot$ denotes the Euclidean norm, in which case the Bregman divergence simply becomes $D_\phi(x,y) = \frac{1}{2}\norm{x-y}^2$.

\medskip{}

\underline{\textbf{Self-concordance}.} We recall the definition of self-concordance, which has been extensively used in applications such as interior-point methods~\citep{nesterov1994interior}.
Given a $C^2$ strictly convex function $\phi$, the local norm at $x\in \inte(\dom(\phi))$ with respect to $\phi$ is defined as
\begin{align*}
    \norm{u}_{\nabla^2 \phi(x)} = \sqrt{\inp{\nabla^2 \phi(x) u}{u}}\qquad \text{for all}~ u\in \R^d\,.
\end{align*}
The dual local norm at $x\in \inte(\dom(\phi))$ with respect to $\phi$ is
\begin{align*}
    \norm{u}_{{[\nabla^2 \phi(x)]}^{-1}} = \sqrt{\bigl\langle {[\nabla^2 \phi(x)]}^{-1} u, u \bigr\rangle}\qquad \text{for all}~u\in \R^d\,.
\end{align*} 

\begin{definition}[Self-concordant function~{\citep[\S 5.1.3]{nesterov2018lectures}}]\label{defn:self_concordant}
We say that a $C^3$ convex function $\phi$ is \emph{self-concordant} with a constant $M_\phi\geq 0$ if for any $x\in \inte(\dom(\phi))$ 
\begin{align*}
    \lvert\nabla^3 \phi(x)[u,u,u]\rvert \leq 2M_\phi \,\norm{u}^3_{\nabla^2 \phi(x)}\qquad \text{for all}~u\in \R^d\,.
\end{align*}
\end{definition}

\underline{\textbf{Relative convexity/smoothness}.} We recall the following definitions:

    \begin{definition}  [Relative convexity~{\citep{bauschkebolteteboulle2017descentlemma,lufreundnesterov2018relativelysmooth}}] \label{defn:rel_cvx}
     $V$ is $\alpha$-convex relative to $\phi$ if 
     \begin{align*}
         V(y) \geq V(x) +\inp{\nabla V(x)}{y-x} + \alpha  \breg{y}{x} \qquad \forall x,y \in  \sQ\,.
     \end{align*}
    \end{definition}

    \begin{definition}  [Relative smoothness~{\citep{bauschkebolteteboulle2017descentlemma,lufreundnesterov2018relativelysmooth}}] \label{def:smooth}
     $V$ is $\beta$-smooth relative to $\phi$  if 
     \begin{align*}
         V(y) \leq V(x) +\inp{\nabla V(x)}{y-x} + \beta  \breg{y}{x} \qquad \forall x,y \in  \sQ\,.
     \end{align*}
    \end{definition}

We give some basic facts about these definitions in Appendix~\ref{app:relative}.

In the rest of this work, we assume that $V\in C^2(\sX)$  where $\sX := \inte(\dom(V))$, that $\sX \subseteq \overline{\sQ}$, and $\sX \cap \sQ \ne \emptyset$.
    Also, we assume that
 $\exp(-V)$ is integrable so that $\pi$ is well-defined; this holds if and only if $V(x) \ge a \,\norm x - b$ for some $a, b > 0$~\citep[Lemma 2.2.1]{brazitikosetal2014isotropiccvx}.

\medskip{}
\underline{\textbf{Optimal transport}.}
Given a lower semicontinuous cost function $c : \R^d\times\R^d\to [0, \infty]$, we can define the \emph{optimal transport} cost between two probability measures $\mu$ and $\nu$ on $\R^d$ to be
\begin{align}\label{eq:ot}
    \inf\{\ex c(X,Y) \mid X \sim \mu, \; Y \sim \nu\}.
\end{align}
Here, the infimum is taken over pairs of random variables $(X,Y)$ defined on the same probability space, with marginal laws $\mu$ and $\nu$ respectively.
It is known that the infimum in~\eqref{eq:ot} is always attained; we refer to the standard introductory texts~\cite{villani2003topics, villani2009ot, santambrogio2015ot} for this and other basic facts in optimal transport.

In this work, we are most concerned with the case when the cost function $c$ is the Bregman divergence associated with a mirror map:

	\begin{definition}[Bregman transport cost] The Bregman transport cost is defined as
	\begin{align*}
	    \wass{\mu}{\nu} := \inf\{\ex \breg{X}{Y} \mid X \sim \mu, \; Y \sim \nu\}\,.
	\end{align*}
	\end{definition}
The  Bregman transport cost was also studied in \cite{cordero2017transport}.

In particular, when $\phi = \frac{\norm \cdot^2}{2}$, we obtain an important special case:

	\begin{definition}[$2$-Wasserstein distance] The $2$-Wasserstein distance $W_2$ is defined as
	\begin{align*}
	    W_2^2(\mu,\nu) := \inf\{\ex[\norm{X-Y}^2] \mid X \sim \mu, \; Y \sim \nu\}\,.
	\end{align*}
	\end{definition}

The $W_2$ optimal transport cost indeed defines a metric over the space of probability measures on $\R^d$ with finite second moment~\citep[Theorem 7.3]{villani2003topics}; we refer to this metric space as the \emph{Wasserstein space}.
The $W_2$ metric is particularly important because it arises from a formal Riemannian structure on the Wasserstein space. This perspective was introduced in~\cite{otto2001porousmedium} and applied to the Langevin diffusion in~\cite{jordan1998variational, ottovillani2000lsi}; in particular, these latter two works justify the perspective of the Langevin diffusion as a gradient flow of the Kullback-Leibler divergence in Wasserstein space. A rigorous exposition to Wasserstein calculus can be found in~\cite{ambrosio2008gradient, villani2009ot}.

Here, we give a brief and informal introduction to the calculation rules of optimal transport. For any regular curve of measures ${(\mu_t)}_{t\ge 0}$, there is a corresponding family of \emph{tangent vectors} ${(v_t)}_{t\ge 0}$~\citep[see][Theorem 8.3.1]{ambrosio2008gradient}; here, $v_t : \R^d\to\R^d$ is a vector field on $\R^d$. Also, if $\eu F$ is any well-behaved functional defined over Wasserstein space, then at each regular measure $\mu$ one can define the \emph{Wasserstein gradient} of $\eu F$ at $\mu$, which we denote $\nabla_{W_2} \eu F(\mu)$; it is also a mapping $\R^d\to\R^d$. Then, we have the calculation rule
\begin{align*}
    \partial_t \eu F(\mu_t)
    &= \ex \langle \nabla_{W_2} \eu F(\mu_t)(X_t), v_t(X_t) \rangle
\end{align*}
for any regular curve of measures ${(\mu_t)}_{t\ge 0}$ with corresponding tangent vectors ${(v_t)}_{t\ge 0}$, where $X_t \sim \mu_t$.
We will use this calculation rule in Appendix~\ref{scn:proof_conv}.

\subsection{Discretization of the mirror-Langevin diffusion}
 \label{sec:discretize}
In order to turn a continuous-time diffusion such as~\eqref{eq:mld} into an implementable algorithm, it is necessary to first discretize the stochastic process. The discretization considered in the prior works~\cite{ zhangetal2020wassersteinmirror,chewietal2020mirrorlangevin} is a simple Euler-Maruyama discretization: fixing $\eta > 0$, we define a sequence of iterates ${(X_k)}_{k\in\N}$ via
\begin{align}\label{eq:euler_maruyama}
    \nabla \phi(X_{k+1}) = \nabla \phi(X_k) - \eta \nabla V(X_k) + \sqrt{2\eta} \, {[\nabla^2 \phi(X_k)]}^{1/2} \, \xi_k\,,
\end{align}
where ${(\xi_k)}_{k\in\N}$ is a sequence of i.i.d.\ standard Gaussians in $\R^d$.

However, many other discretizations are possible. Indeed, in many machine learning applications, the most costly step is the evaluation of $\nabla V$, which may require a sum over a large training set, whereas the mirror map $\phi$ may be chosen to have a simple form.
For the purpose of obtaining a more efficient sampling algorithm, it may therefore be a favorable trade-off to use a high-precision implementation of the diffusion step at the cost of additional computation time (which nonetheless does not require additional query access to the gradients of $V$). Motivated by these considerations, we propose a new discretization (see Figure~\ref{fig:MLA} for an illustration):

\begin{mdframed}
{\bf The mirror-Langevin algorithm ($\msf{MLA}$):}
\begin{subequations}
        \makeatletter
        \def\@currentlabel{$\msf{MLA}$}
        \makeatother
\label{mla}
\renewcommand{\theequation}{$\msf{MLA}$:$\arabic{equation}$}
\begin{align}
	    X_{k+1/2} &:= \argmin_{x\in \sQ}{[
	    \inp{\st \nabla V(X_k) }{x} + \breg{x}{X_k}]}\,, \label{mla1} \\
	    X_{k+1} &:= \nabla \phi^\star (\bx_\eta)\,, \qquad \text{where}~\begin{cases}  
	    \D \bx_t = \sqrt 2 \, {[\nabla^2 \phi^\star(\bx_t)]}^{-1/2} \, \D B_t\,, \\
	    \bx_0 = \nabla \phi (X_{k+1/2})\,.
	    \end{cases} \label{mla2}
	\end{align}
\end{subequations}
\end{mdframed}

In~\ref{mla2}, the stochastic processes ${(W_t)}_{t\ge 0}$ are assumed to be driven by independent Brownian motions at each iteration.
When $\phi =\nicefrac{\norm{\cdot}^2}{2}$,~\ref{mla} recovers the unadjusted Langevin algorithm.

\noindent {\bf Practicality.} Although~\ref{mla2} is defined using the exact solution of an SDE (and we analyze the exact step~\ref{mla2} for simplicity), it should be understood as capturing the idea of discretizing the diffusion step more finely (e.g., through multiple inner iterations of an Euler-Maruyama discretization) than the gradient step. This is indeed amenable to practical implementation since, as previously discussed, the gradient step is typically much more costly than the diffusion step. Moreover, this is justified by our theoretical results in the next section, which together with the conjecture of~\cite{zhangetal2020wassersteinmirror} suggest that fine discretization of~\ref{mla2} is potentially crucial for attaining vanishing bias. Nevertheless, it is indeed the case that a single iteration of~\ref{mla} is more costly than a single step of the Euler-Maruyama discretization of~\ref{eq:mld}, and this represents a limitation of our work.

\begin{remark}
Our proposed discretization can be understood as a more faithful discretization of the mirror-Langevin diffusion~\eqref{eq:mld}, \emph{\`{a} la}~\cite{gunasekarwoodworthsrebro2020mirrorless}. It can also be understood as the forward-flow discretization of~\ref{eq:mld} in the interpretation of~\cite{Wib18}.  
\end{remark}

\begin{remark}
In order for~\ref{mla2} to be well-defined, we require assumptions on $\phi$ such that the diffusion ${(W_t)}_{t\ge 0}$ is non-explosive, i.e., it does not exit $\inte(\dom(\phi^\star))$ in finite time. This holds under very mild assumptions on $\phi$; see~\cite{gyongykrylov1996strongsol}. For situations of interest, the assumptions of~\cite{gyongykrylov1996strongsol} can be checked directly.
\end{remark}

\section{Convergence rates of mirror-Langevin algorithm}
\label{sec:main}
First, we state the main assumptions which are used for our main results.

\begin{assumption}[Self-concordance of $\phi$] \label{assump:1}
We assume that $\phi$ is $M_\phi$-self-concordant (Definition~\ref{defn:self_concordant}).
\end{assumption}

\begin{assumption}[Relative Lipschitzness] \label{assump:2}
We assume that $V$ is $L$-relatively Lipschitz with respect to $\phi$, in the sense that $\norm{\nabla V(x)}_{{[\nabla^2 \phi(x)]}^{-1}} \le L$ for all $x \in \sX$ (see Section~\ref{scn:background} for the definition of the local norm used here).
\end{assumption}

\begin{assumption}[Relative convexity and smoothness] \label{assump:3}
We assume that $V$ is $\alpha$-convex relative to $\phi$ and $\beta$-smooth relative to $\phi$, where $0 \le \alpha \le \beta \le \infty$ (Definitions~\ref{defn:rel_cvx} and~\ref{def:smooth}).
\end{assumption}

\begin{remark}
Our analysis works under weaker assumptions than those of~\cite{zhangetal2020wassersteinmirror}.
In particular, our analysis does not assume the moment condition on the Hessian ((A2) therein) and the bound on the commutator between $\nabla^2 \phi$ and $\nabla^2 V$ ((A5) therein).
Moreover, our analysis uses weaker (and more standard) definitions of self-concordance. 
\end{remark}
  
Throughout this section, we assume the conditions listed above, and we present convergence results for~\ref{mla} under various sets of assumptions. Our first two results pertain to the smooth case, i.e.\ $\beta < \infty$.
Define the parameter
\begin{align*}
    \boxed{\beta' := \beta + 2M_\phi L.}
\end{align*}

One might wonder how large $\beta'$ is for typical applications. 
First, we only need $\phi$ to be self-concordant, \emph{not} a self-concordant \emph{barrier}.
Hence the appearance of $M_\phi$ is typically not problematic; for instance, a log barrier with $m$ constraints is $O(1)$-self-concordant.
The smoothness parameter could be large and dimension-dependent in general.
However, such situations are actually where our approach could be potentially advantageous.
In Appendix~\ref{sec:dimreduc}, we demonstrate an example where the smoothness parameter becomes much smaller by choosing $\phi$ carefully.

\begin{theorem}[Weakly convex case] \label{thm:w}
Suppose that Assumptions~\ref{assump:1}, \ref{assump:2}, \ref{assump:3} hold with  $\alpha = 0$ and $\beta' >0$.
For a target accuracy $\eps>0$, let $X_k\sim \mu_k$ denote the iterates of \ref{mla} with step size $\st= \min\{\frac{\eps}{2\beta' d}, \frac{1}{\beta'}\}$.
Then, the following convergence rate holds for the mixture distribution $\avg_N:=\frac{1}{N}\sum_{k=1}^N \mu_k$:
\begin{align}
      \kl{\avg_N}{\pi}  \leq \eps\,, \quad  \text{provided that }N\geq \frac{ 4\beta' d  \, \wass{\pi}{\mu_0}}{\eps^2} \max\bigl\{1, \frac{\eps}{2d}\bigr\}\,.
\end{align}
\end{theorem}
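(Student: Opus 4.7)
The plan is to adapt the proof strategy of Durmus-Majewski-Miasojedow for the unadjusted Langevin algorithm to the mirror-Langevin setting, using the Bregman transport cost $\wass{\pi}{\cdot}$ as the Lyapunov functional in place of $W_2^2$. The target is a one-step inequality of the form
\[
    \wass{\pi}{\mu_{k+1}} \leq \wass{\pi}{\mu_k} - 2\st\, \kl{\mu_{k+1}}{\pi} + C\st^2 \beta' d
\]
for a universal constant $C$. Once this is in hand, telescoping over $k=0,\ldots,N-1$, using $\wass{\pi}{\mu_N}\geq 0$, dividing by $N$, and invoking convexity of $\kl{\cdot}{\pi}$ in its first argument (Jensen) gives
\[
    \kl{\avg_N}{\pi} \leq \frac{\wass{\pi}{\mu_0}}{2\st N} + \frac{C\st\,\beta' d}{2},
\]
and balancing the two terms at the prescribed step size $\st = \min\{\eps/(2\beta' d),\, 1/\beta'\}$ yields the claimed iteration complexity.

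Establishing the one-step inequality is the technical heart of the argument, and I analyze the two substeps of \ref{mla} separately. For the mirror descent substep \ref{mla1}, the optimality condition reads $\nabla\phi(X_{k+1/2}) = \nabla\phi(X_k) - \st\,\nabla V(X_k)$; combined with the three-point identity for Bregman divergences and with the relative convexity and smoothness of $V$ with respect to $\phi$ (and using $\st\beta\leq 1$), this produces a deterministic upper bound on $\breg{X_\pi}{X_{k+1/2}}$ in terms of $\breg{X_\pi}{X_k}$, a gain term proportional to $V(X_k)-V(X_\pi)$, and an $O(\st^2\beta d)$ discretization residual, valid for any coupling of $X_\pi\sim\pi$ with $X_k\sim\mu_k$. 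For the mirror diffusion substep \ref{mla2}, I use the Wasserstein calculus recalled in Section~\ref{scn:background} to differentiate $\wass{\pi}{\mu_{k+s}}$ along the Fokker-Planck flow of \ref{mla2} on $s\in[0,\st]$; the chain rule identifies the derivative as the pairing of the flow's tangent field with the Wasserstein gradient of $\wass{\pi}{\cdot}$, and after manipulation this produces an entropy-production term plus lower-order contributions. The new displacement-convexity-of-entropy-in-Bregman result (Theorem~\ref{thm:ent_cvx_bregman}) is precisely the tool needed to convert the entropy-production term into a quantity controlling $\kl{\mu_{k+1}}{\pi}$, rather than $\kl{\mu_{k+1/2}}{\pi}$.

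The main obstacle will be carrying out the Wasserstein-calculus computation for \ref{mla2} cleanly. The diffusion is written in dual coordinates and must be pushed through $\nabla\phi^\star$ to primal coordinates in order to compare with $\pi$, and the interaction between the Hessian of $\phi$ (which defines the diffusion coefficient) and the Bregman geometry (which defines the Lyapunov functional) produces several terms whose signs must be controlled. This is also where the $2M_\phi L$ correction in $\beta' = \beta + 2M_\phi L$ enters: applying Itô's formula to $\breg{X_\pi}{X_{k+s}}$ along \ref{mla2} brings in third derivatives of $\phi$, which are bounded via self-concordance, and these are paired against $\nabla V$ whose size is controlled by the relative Lipschitz assumption, yielding an additional $O(\st^2 M_\phi L d)$ discretization error. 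Combining the mirror-descent estimate, the mirror-diffusion estimate, and the displacement convexity lemma produces the target one-step inequality, after which the rest of the argument is a routine telescoping and Jensen calculation.
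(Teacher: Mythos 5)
Your overall skeleton is the same as the paper's: a one-step inequality of the form $\st\,\kl{\mu_{k+1}}{\pi} \le \wass{\pi}{\mu_k} - \wass{\pi}{\mu_{k+1}} + O(\beta' d\st^2)$ (this is Lemma~\ref{lem:master} with $\alpha=0$), proved by treating \ref{mla1} with the Bregman proximal inequality and \ref{mla2} with Wasserstein calculus plus Theorem~\ref{thm:ent_cvx_bregman}, followed by telescoping and Jensen. However, there is a genuine gap in how you assemble the energy part of $\kl{\mu_{k+1}}{\pi}$. Writing $\kl{\mu_{k+1}}{\pi} = \ee(\mu_{k+1}) + \hh(\mu_{k+1}) - \ee(\pi) - \hh(\pi)$, the diffusion substep together with the displacement convexity theorem controls only the entropy part, giving $\eta\,\{\hh(\mu_{k+1}) - \hh(\pi)\} \le \wass{\pi}{\mu_{k+1/2}} - \wass{\pi}{\mu_{k+1}}$; and the mirror descent substep, as you set it up, produces a gain in the energy at $X_k$ (in the paper, at $X_{k+1/2}$: $\st\,\{\ee(\mu_{k+1/2}) - \ee(\pi)\} \le \wass{\pi}{\mu_k} - \wass{\pi}{\mu_{k+1/2}}$). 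These two pieces combine to a ``mixed'' quantity, not to $\kl{\mu_{k+1}}{\pi}$. The missing bridge is the bound on the energy increase across the diffusion substep, $\ee(\mu_{k+1}) - \ee(\mu_{k+1/2}) \le (\beta + 2M_\phi L)\, d\eta$, obtained by applying It\^{o}'s formula to $V\circ\nabla\phi^\star$ along \ref{mla2}: the Hessian of this composition contains $\nabla^2 V$ (bounded by $\beta\nabla^2\phi$ via relative smoothness, giving $\beta d\eta$) and $\nabla^3\phi$ contracted with $\nabla V$ (bounded via self-concordance and relative Lipschitzness, giving $2M_\phi L d\eta$). Without this step there is no route from your substep estimates to the stated one-step inequality.

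Relatedly, your attribution of the two discretization error terms is off in a way that would block the computation you describe. You place the $O(\st^2 M_\phi L d)$ term in an It\^{o} expansion of $\breg{X_\pi}{X_{k+s}}$ along \ref{mla2}; but \ref{mla2} contains no $\nabla V$ (the drift $-\nabla V$ lives entirely in \ref{mla1}), so no $L$-dependent term can arise from that computation --- and such a computation, carried out with a coupling fixed at time $0$, would not yield the clean entropy-production pairing either, which is why the paper instead uses superdifferentiability of $t\mapsto\wass{\pi}{\nu_t}$ with re-optimized couplings and then Theorem~\ref{thm:ent_cvx_bregman}, together with monotonicity of $t \mapsto \hh(\nu_t)$ to pass from $\hh(\nu_t)$ to $\hh(\nu_\eta)$. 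Likewise, the deterministic mirror descent substep produces no $O(\st^2\beta d)$ residual: the Bregman proximal inequality plus $\st\le 1/\beta$ absorb the smoothness term exactly, with no dimension dependence. Both $d$-dependent errors come from tracking the energy $\ex V$ along the diffusion substep, as above. (Your target inequality with the factor $2\st$ rather than $\st$ is only a constant discrepancy and is harmless; the concluding telescoping and Jensen step is fine.)
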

\begin{proof}
See Appendix~\ref{pf:thm:w}.
\end{proof}

\begin{theorem}[Strongly relatively convex case] \label{thm:s}
Suppose that Assumptions~\ref{assump:1}, \ref{assump:2}, \ref{assump:3} hold with  $\alpha,\beta' > 0$.
\begin{enumerate}
    \item (Convergence in Bregman transport cost) For a target accuracy $\eps>0$, let  $X_k\sim \mu_k$ denote the iterates of \ref{mla} with step size $\st= \min\{\frac{\alpha \eps}{2\beta' d}, \frac{1}{\beta'}\}$.
Then,
\begin{align*}
    \wass{\pi}{\mu_N} \leq \eps\,, \quad  \text{provided that }N\geq \frac{2\beta' d}{\alpha^2 \eps} \, \ln\Bigl(\frac{2\wass{\pi}{\mu_0}}{\eps}\Bigr) \max\bigl\{1, \frac{\alpha\eps}{2d}\bigr\}\,.
\end{align*}

\item (Convergence in KL divergence) For a target accuracy $\eps>0$, suppose that $X_0\sim \mu_0$ satisfies $\wass{\pi}{\mu_0}\leq \eps/\alpha$.
Let $X_k\sim \mu_k$ denote the iterates of \ref{mla} with step size $\st= \min\{\frac{\alpha\eps}{2\beta' d}, \frac{1}{\beta'}\}$.
Then, the following convergence rate holds for the mixture distribution $\avg_N:=\frac{1}{N}\sum_{k=1}^{N} \mu_k$,
\begin{align*}
    \kl{\avg_N}{\pi} \leq \eps\,, \quad  \text{provided that }N\geq \frac{4\beta' d}{\alpha \eps} \, \max\bigl\{1, \frac{\eps}{2d}\bigr\}\,.
\end{align*}
\end{enumerate}
\end{theorem}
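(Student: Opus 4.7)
The plan is to dispatch part~2 almost immediately from part~1 together with Theorem~\ref{thm:w}, and concentrate the real work on part~1. Since $\alpha>0$ implies $\alpha\ge 0$, the weakly-convex hypothesis of Theorem~\ref{thm:w} is satisfied, so that theorem applies verbatim and guarantees $\kl{\avg_N}{\pi}\le\eps$ provided $N\ge 4\beta' d\,\wass{\pi}{\mu_0}/\eps^2\cdot\max\{1,\eps/(2d)\}$ for any starting distribution $\mu_0$. Substituting the warm-start bound $\wass{\pi}{\mu_0}\le\eps$ (which is exactly what part~1 delivers) turns this into $N\ge 4\beta' d/\eps\cdot\max\{1,\eps/(2d)\}$, matching the condition announced in part~2. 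So part~2 requires no further argument once part~1 is in hand, and the remainder of the plan is devoted to part~1.

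For part~1, the plan is to establish a per-iteration contraction of the schematic form
\begin{align*}
    \wass{\pi}{\mu_{k+1}} \le (1-\alpha\st)\,\wass{\pi}{\mu_k} + C\beta' d\,\st^2
\end{align*}
and then iterate. The one-step inequality is proved by analyzing the two halves of \ref{mla} separately. For the mirror-descent half-step~\ref{mla1}, I would couple $X_k\sim\mu_k$ optimally to an $X_\pi\sim\pi$; the three-point identity for the Bregman divergence, combined with the defining inequality of $\alpha$-relative convexity (Definition~\ref{defn:rel_cvx}), produces the contraction factor $(1-\alpha\st)$ along with a residual $O(\st^2)$ error controlled by relative Lipschitzness. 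For the mirror-diffusion half-step~\ref{mla2}, the key tool is the new displacement-convexity-of-entropy result for Bregman transport costs (Theorem~\ref{thm:ent_cvx_bregman}), which plays the role that McCann's classical displacement convexity plays in the Euclidean Langevin analysis; self-concordance of $\phi$ (Definition~\ref{defn:self_concordant}) together with relative Lipschitzness is what absorbs the higher-order corrections and promotes $\beta$ into $\beta'=\beta+2M_\phi L$ in the per-step bias.

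Given the one-step inequality, iteration yields
\begin{align*}
    \wass{\pi}{\mu_N} \le (1-\alpha\st)^N\,\wass{\pi}{\mu_0} + \frac{C\beta' d\,\st}{\alpha}.
\end{align*}
Choosing $\st=\min\{\alpha\eps/(2\beta' d),\,1/\beta'\}$ drives the bias term below $\eps/2$, while $N\ge (\alpha\st)^{-1}\ln(2\wass{\pi}{\mu_0}/\eps)$ drives the geometric term below $\eps/2$; tracking the constants reproduces the complexity stated in the theorem. The main obstacle is the one-step bound for the mirror-diffusion step: since the Bregman transport cost is asymmetric and lacks the Riemannian structure of $W_2$, the standard displacement-convexity toolkit does not apply off the shelf and must be replaced by Theorem~\ref{thm:ent_cvx_bregman}, and the self-concordance calculation that produces the $2M_\phi L$ correction has to be arranged so that the bias scales as $\beta' d\,\st^2$ rather than picking up a spurious extra $d$-factor per substep. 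Once this ingredient is in place, the rest of the argument is a direct telescoping adaptation of the Langevin analysis of~\cite{durmus2019analysis}.
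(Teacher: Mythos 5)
Your proposal is correct and follows essentially the same route as the paper: the paper's proof of part~1 simply invokes the per-iteration progress bound (Lemma~\ref{lem:master}), drops the nonnegative KL term to get $\wass{\pi}{\mu_{k+1}}\le(1-\alpha\st)\,\wass{\pi}{\mu_k}+\beta' d\st^2$, iterates, and chooses $\st$ and $N$ exactly as you do, while part~2 is likewise obtained by applying Theorem~\ref{thm:w} from the warm start. The one-step bound you sketch re-deriving (mirror-descent step via the Bregman proximal inequality and relative convexity, diffusion step via Theorem~\ref{thm:ent_cvx_bregman} with self-concordance and relative Lipschitzness giving $\beta'=\beta+2M_\phi L$) is precisely the content of Lemma~\ref{lem:master} already established in Section~\ref{sec:per}, so no additional work is required there.
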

\begin{proof}
See Appendix~\ref{pf:thm:s}.
\end{proof}

 Note that the initialization assumption $\eu D_\phi(\pi,\mu_0) \le \eps/\alpha$ in the second assertion of Theorem~\ref{thm:s} can be obtained from the first guarantee of Theorem~\ref{thm:s}.
Chaining together the two parts of the theorem, we therefore obtain the following guarantee: suppose that we initialize~\ref{mla} at a distribution $\mu_0$.
Then, with step size $\st= \min\{\frac{\alpha \eps}{2\beta' d}, \frac{1}{\beta'}\}$, we obtain
\begin{align*}
    \eu D_{\msf{KL}}\Bigl( \frac{1}{N_1} \sum_{k=N_0+1}^{N_0+N_1} \mu_k \Bigm\Vert \pi\Bigr) \le \eps\,, \qquad\text{provided that}~\begin{cases} N_0 \ge \frac{2\beta' d}{\alpha \eps} \, \ln\bigl(\frac{2\alpha \wass{\pi}{\mu_0}}{\eps}\bigr) \max\bigl\{1, \frac{\eps}{2d}\bigr\}\,, \\[0.25em] N_1 \ge \frac{4\beta' d}{\alpha\eps} \, \max\bigl\{1, \frac{\eps}{2d}\bigr\}\,. \end{cases}
\end{align*}

Observe also that for the case $\phi= \frac{\norm{\cdot}^2}{2}$, Theorems~\ref{thm:w} and~\ref{thm:s}  recover the corresponding convergence guarantees for the unadjusted Langevin algorithm~\citep[Corollary 7, and Corollaries 10 and 11 respectively in][]{durmus2019analysis}.\footnote{In this case, $M_\phi = 0$, so the Lipschitz constant $L$ does not enter the final result. In particular, it is not contradictory to assume strong convexity ($\alpha > 0$).} 

Next, we present our guarantee for the non-smooth case $\beta = \infty$. For this result, we assume that $\phi$ is strongly convex w.r.t.\ a norm $\norm\cdot$ on $\R^d$, and that $V$ is $\lip$-Lipschitz in this norm. Since the norm of the gradient should be measured in the dual norm $\norm\cdot_\star$, this means precisely that
\begin{align}\label{eq:lipschitz_dual_norm}
    \norm{\nabla V(x)}_\star \le \lip\,,\qquad\text{for all}~x\in\sX\,.
\end{align}
We also note that the next result does not require self-concordance of $\phi$.

\begin{theorem}[Non-smooth case] \label{thm:w:n}
Assume $\phi$ is $1$-strongly convex w.r.t.\ a norm $\norm\cdot$ on $\R^d$, that $V$ is $\lip$-Lipschitz in this norm (in the sense of~\eqref{eq:lipschitz_dual_norm}), and that $\alpha = 0$ (i.e., $V$ is convex).
For a target accuracy $\eps>0$, let $X_k\sim \mu_k$ denote the iterates of~\ref{mla} with step size $\st= \eps/\lip^2$.
Then, the following convergence rate holds for the mixture distribution $\avg_N:=\frac{1}{N}\sum_{k=1}^N \mu_k$:
\begin{align*}
      \kl{\avg_N}{\pi}  \leq \eps\,, \quad  \text{provided that}~N \geq\frac{2 \lip^2 \wass{\pi}{\mu_{1/2}}}{\eps^2} \,.
\end{align*}
\end{theorem}
\begin{proof}
See Appendix~\ref{pf:thm:w:n}.
\end{proof}

The assumption~\eqref{eq:lipschitz_dual_norm} is stronger than relative Lipschitzness: if $V$ satisfies~\eqref{eq:lipschitz_dual_norm} and $\phi$ is $1$-strongly convex w.r.t.\ $\norm\cdot$, then $V$ is $L$-relatively Lipschitz with respect to $\phi$ with $L \le \lip$. When $\norm\cdot$ is the Euclidean norm and $\phi = \frac{\norm\cdot^2}{2}$, then we recover a special case of~\citep[Corollary 14]{durmus2019analysis}.

We now make a number of remarks about our result.

\begin{remark}[Implementing $\avg_N$]
One can output a sample from $\avg_N$   by simply outputting one of the iterates  ${\{X_k\}}_{k=1}^N$ chosen uniformly at random. \end{remark}

\begin{remark}[Convergence in other metrics]
Using standard inequalities, our results for convergence in KL divergence imply convergence in a number of other information divergences such as the total variation distance, see~\citep[\S 2.4]{tsybakov2009nonparametric}.

When $V$ is $\alpha$-strongly convex (w.r.t.\ $\frac{\norm \cdot^2}{2}$), then the $T_2$ transportation-cost inequality~\citep[Theorem 22.14]{villani2009ot} $\alpha W_2^2(\mu, \pi) \le \kl\mu\pi$ implies convergence in the $W_2$ distance as well. In general, we do not have convergence in $W_2$, but we can always obtain convergence with respect to a different optimal transport cost, namely, the Bregman transport cost $\eu D_V$ associated with $V$. This is a consequence of Corollary~\ref{cor:transport} \citep[also see][Proposition 1]{cordero2017transport}, which asserts that $\eu D_V(\mu,\pi) \leq \kl{\mu}{\pi}$.

\end{remark}

\begin{remark}[Dimension dependence] 
Ignoring for now the dependence on $\beta'$ (which may also have a dimension dependence depending on the application), the Bregman divergence $\wass{\pi}{\mu_0}$ term is typically of size $O(d)$ (see Section~\ref{scn:bayesian_logistic} for a particular instance of this). Thus, our overall dimension dependence is $O(d^2)$ for the weakly convex case and $O(d)$ for the strongly convex case. 
Overall, this is a significantly better dependence on the dimension as compared to the previous works~\cite{brosse17a,bubeck2018sampling}; we perform a detailed comparison in  Section~\ref{scn:bayesian_logistic} for a specific setting.

We also remark that mirror descent has classically been used for dimension reduction by changing the geometry of the algorithm from $\ell_2$ to $\ell_1$. We investigate the possibility of doing the same for sampling in Appendix~\ref{sec:dimreduc}.
\end{remark}

\begin{remark}[Comparison with~\cite{hsieh2018mirrored}]\label{rmk:comparison_hsieh} 
\cite{hsieh2018mirrored} show that for strictly log-concave targets, there exists a good mirror map $\phi$ for which the pushforward of the target distribution via $\nabla \phi$ enjoys the same guarantees as ordinary Langevin. However, this result is only existential and gives no guidance on how to construct the mirror map. In contrast, our theorems hold for any choice of mirror map which satisfies our assumptions, and provide guidance on how to choose the mirror map.
Also, our relative smoothness condition allows for potentials which blow up at the boundary of their domain (i.e.\ the target distribution vanishes near the boundary of its support), whereas this is forbidden by the assumptions of \cite{hsieh2018mirrored}. Lastly, our algorithm does not require computing the third derivative of the mirror map, whereas this is required for \cite{hsieh2018mirrored}.
\end{remark}

\begin{remark}[Comparison with~\cite{zhangetal2020wassersteinmirror}]\label{rmk:comparison_zhang}
\cite{zhangetal2020wassersteinmirror} performs an analysis of the Euler-Maruyama discretization of~\ref{eq:mld}, which we temporarily refer to as $\msf{MLA}'$.
Our result guarantees that for any desired accuracy $\epsilon$, it is possible to choose the step size sufficiently small so that~\ref{mla} achieves the target accuracy; in contrast, the result of~\cite{zhangetal2020wassersteinmirror} only guarantees that $\msf{MLA}'$ contracts to within a ball around $\pi$ of radius $O(\sqrt d)$ (measured w.r.t.\ a modified Wasserstein distance).\footnote{Notably, the radius of this ball is comparable to the distance at initialization.} Moreover,~\cite{zhangetal2020wassersteinmirror} conjecture that their bias term is unavoidable. 

In light of our result, we believe that it is an interesting open question to resolve their conjecture. Their conjecture, if true, suggests that replacing~\ref{mla2} in our algorithm by a single step of the Euler-Maruyama discretization has disastrous effects on the convergence of the algorithm, and therefore provides further support for considering~\ref{mla} instead of $\mathsf{MLA}'$.

Recently, after the first draft of our paper was published online, Li et al.~\cite{lietal2021mirrorlangevin} gave an analysis of $\msf{MLA}'$ under a subset of the assumptions of~\cite{zhangetal2020wassersteinmirror} which indeed exhibits vanishing bias, provided that the relative strong convexity parameter of the potential is sufficiently large compared to the modified self-concordance parameter of the mirror map. It remains an open question to remove this latter restriction from their work, and moreover to obtain similar results under the more usual definitions of relative convexity/smoothness and self-concordance that we adopt in this work.
\end{remark}

In order to generalize the discretization analysis from the vanilla Langevin algorithm to the mirror-Langevin algorithm, in the next section
we prove a new \emph{displacement convexity} result for the entropy with respect to the Bregman transport cost which may be of independent interest.

\section{Convexity of the entropy with respect to the Bregman divergence}\label{scn:cvxty_entropy}

It is well-known that the entropy functional $\hh$ is displacement convex along $W_2$ geodesics~\citep[Theorem 9.4.11]{ambrosio2008gradient}.
In fact, this displacement convexity is  crucial in showing that $\kl{\cdot}{\pi} = \ee+\hh$ is displacement convex (when $\pi$ is log-concave), which in turn is used to analyze the convergence of  \ref{eq:langevin} to  the target measure. Therefore, in order to understand the convergence of \ref{eq:mld}, it is crucial to see if such a result is true when $W_2$ is replaced by $\eu D_\phi$. 
We prove that indeed the displacement convexity-like property holds for $\hh$ under $\eu D_\phi$-optimal couplings.

\begin{theorem}[``Convexity'' of the entropy with respect to the Bregman divergence]\label{thm:ent_cvx_bregman}
    Let $\mu$, $\nu$ be probability measures on $\R^d$ and let $X \sim \mu$, $Y \sim \nu$ be coupled according to the Bregman transport cost $\wass\mu\nu$. Then, it holds that
    \begin{align*}
        \hh(\nu) \ge \hh(\mu) + \ex\bigl\langle [\nabla_{W_2} \hh(\mu)](X), Y-X\bigr\rangle.
    \end{align*}
\end{theorem}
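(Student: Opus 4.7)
The plan is to reduce the Bregman optimal transport problem to a standard quadratic OT problem via the change of variables $z = \nabla \phi(y)$, use Brenier's theorem to obtain a structural description of the optimal coupling, and then adapt the classical proof of displacement convexity of the entropy functional.

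First, using the Fenchel identity $\phi(y) - \langle \nabla \phi(y), y\rangle = -\phi^\star(\nabla \phi(y))$, I rewrite the Bregman cost as
\begin{align*}
    \breg{x}{y} = \phi(x) + \phi^\star(\nabla \phi(y)) - \langle x, \nabla \phi(y)\rangle.
\end{align*}
Let $\tilde\nu := (\nabla \phi)_\# \nu$. For any coupling $(X,Y) \sim (\mu,\nu)$, its pushforward under $(x,y) \mapsto (x, \nabla \phi(y))$ is a coupling $(X,Z) \sim (\mu, \tilde\nu)$, and the first two summands above contribute only marginal quantities $\int \phi \, \D\mu$ and $\int \phi^\star \, \D\tilde\nu$. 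Hence minimizing $\wass\mu\nu$ over couplings is equivalent to maximizing $\ex\langle X, Z\rangle$ over couplings of $(\mu, \tilde\nu)$, which is exactly Brenier's problem. By Brenier's theorem, $Z = \nabla \psi(X)$ almost surely for some convex $\psi$, so the $\eu D_\phi$-optimal map from $\mu$ to $\nu$ is $T(x) = \nabla \phi^\star(\nabla \psi(x))$ and $Y = T(X)$. Its Jacobian $\nabla T(x) = [\nabla^2 \phi(T(x))]^{-1} \, \nabla^2 \psi(x)$ is similar to the positive semidefinite matrix $[\nabla^2 \phi(T(x))]^{-1/2}\,\nabla^2\psi(x)\,[\nabla^2\phi(T(x))]^{-1/2}$, so $\nabla T(x)$ has nonnegative real eigenvalues.

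With this structure in hand, I apply the Monge--Amp\`ere change of variables $\mu(x) = \nu(T(x)) \det \nabla T(x)$ to get $\hh(\nu) - \hh(\mu) = -\ex[\log \det \nabla T(X)]$. The inequality $\log \lambda \le \lambda - 1$ applied eigenvalue-wise gives $-\log \det \nabla T(x) \ge d - \tr \nabla T(x)$, so
\begin{align*}
    \hh(\nu) - \hh(\mu) \ge \ex\bigl[d - \tr \nabla T(X)\bigr] = \int \mu(x) \bigl[d - \nabla \cdot T(x)\bigr] \D x = \ex\bigl\langle \nabla \log \mu(X), T(X) - X\bigr\rangle,
\end{align*}
where the last step is integration by parts (the boundary term vanishing, e.g., because $\mu$ decays at $\partial\sQ$). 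Since $\nabla_{W_2} \hh(\mu) = \nabla \log \mu$ and $Y = T(X)$, this is the desired inequality. The main conceptual step is the initial reduction to a Brenier problem; once that is done, the remainder runs along the classical lines of Otto--Villani. The main technical hurdles are ensuring Brenier's theorem applies to $(\mu, \tilde\nu)$ (needing $\mu$ absolutely continuous), that the Monge--Amp\`ere identity holds (needing some regularity of $\psi$), and that boundary terms vanish in the integration by parts -- all handled under mild regularity assumptions on $\mu$, $\nu$, and $\phi$ via the standard treatment in~\cite{villani2003topics, villani2009ot}.
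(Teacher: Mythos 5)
Your argument is correct in outline and reaches the same structural fact as the paper, but by a partly different route, and it is worth comparing the two. For the structure of the optimal coupling, you reduce to the classical quadratic Brenier problem by writing $\breg{x}{y} = \phi(x) + \phi^\star(\nabla\phi(y)) - \langle x, \nabla\phi(y)\rangle$ and observing that the first two terms only depend on the marginals $\mu$ and $(\nabla\phi)_\#\nu$; this is an elementary and self-contained derivation of what the paper proves as Theorem~\ref{thm:bregman_brenier} via general $c$-concave duality (\cite[Theorem 10.28]{villani2009ot}, plus the verification that $D_\phi$-concavity of $h$ forces $\phi-h$ to be convex). Your reduction buys simplicity and avoids checking the hypotheses of the duality theorem, at the mild price of needing $\int\phi\,\D\mu$ and $\int \phi^\star\,\D\bigl((\nabla\phi)_\#\nu\bigr)$ finite so that the marginal terms can be split off without an $\infty-\infty$ issue, and needing $\mu$ absolutely continuous for Brenier — assumptions at the same level of informality as the theorem statement itself. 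For the second half, your Monge--Amp\`ere/Jacobian computation ($-\log\det\nabla T \ge d - \tr\nabla T$ via the nonnegative real eigenvalues of $[\nabla^2\phi(T(x))]^{-1}\nabla^2\psi(x)$, then integration by parts to produce $\ex\langle\nabla\log\mu(X), T(X)-X\rangle$) is precisely the calculation the paper presents only as a \emph{formal} remark after its proof; the paper's actual proof instead observes that $X$, $\nabla\zeta(X)=\nabla\phi(Y)$, and $Y=\nabla\phi^\star(\nabla\zeta(X))$ exhibit $(X,Y)$ as endpoints of a generalized geodesic and invokes convexity of $\hh$ along generalized geodesics (\cite[Theorem 9.4.11]{ambrosio2008gradient}) precisely to sidestep the regularity questions you defer to ``the standard treatment'': for a Brenier map, $\nabla\psi$ is only differentiable in the Aleksandrov (a.e.) sense, the Monge--Amp\`ere identity holds only with this a.e.\ Jacobian, and in the integration by parts the a.e.\ trace $\tr\nabla T$ differs from the distributional divergence by a singular part whose sign must be checked (it is favorable, in the McCann style, because $\nabla\phi^\star$ is smooth and the distributional Hessian of $\psi$ dominates its absolutely continuous part, so your chain of inequalities survives). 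So your proof is essentially a rigorous-modulo-regularity version of the paper's remark; to match the paper's level of rigor you would either carry out that Aleksandrov/singular-part bookkeeping explicitly or, as the paper does, note that your identity $Y=\nabla\phi^\star(\nabla\psi(X))$ already places $(X,Y)$ on a generalized geodesic and cite the known displacement convexity result.
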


As a corollary, we can use the calculus of optimal transport in order to recover the transportation-cost inequality of~\cite{cordero2017transport}. In the following, we do not carry out the approximation arguments necessary to make the proof fully correct because a rigorous proof of the statement is already given in~\cite{cordero2017transport}.\footnote{In fact, the proof of~\cite{cordero2017transport} does not even require convexity of $V$.} Rather, our main purpose in giving this argument is simply to point out the convexity principle which underlies the transport inequality.

\begin{corollary}[{\cite[Proposition 1]{cordero2017transport}}] \label{cor:transport}
    For any probability measure $\mu$ on $\R^d$,
    \begin{align*}
        \kl\mu\pi
        \ge \eu D_V(\mu,\pi).
    \end{align*}
\end{corollary}
\begin{proof}[Proof Sketch]
Let $(X,Y)$ be optimally coupled according to the Bregman transport cost $\eu D_V(\cdot,\cdot)$ between $\mu$ and $\pi$.
We decompose $\kl\mu\pi = \ex V(X) + \hh(\mu)$. 
On one hand, the first term is
\begin{align*}
    \ex V(X)
    &= \ex V(Y) + \ex \langle \nabla V(Y), X-Y\rangle + \ex D_V(X, Y).
\end{align*}
On the other hand, the convexity result (Theorem~\ref{thm:ent_cvx_bregman}) shows that
\begin{align*}
    \hh(\mu)
    &\ge \hh(\pi) + \ex\langle \nabla_{W_2} \hh(\pi)(Y), X-Y\rangle.
\end{align*}
Putting these together, we obtain
\begin{align*}
    &\kl\mu\pi \\
    &\qquad \ge \ex V(Y) + \ex \langle \nabla V(Y), X-Y\rangle + \ex D_V(X, Y) + \hh(\pi) + \ex\langle \nabla_{W_2} \hh(\pi)(Y), X-Y\rangle \\
    &\qquad = \kl\pi\pi + \ex \bigl\langle [\nabla V + \nabla_{W_2} \hh(\pi)](Y), X-Y \bigr\rangle + \ex D_V(X,Y)
    = \ex D_V(X, Y),
\end{align*}
since $\nabla V+ \nabla_{W_2} \hh(\pi)$, the $W_2$ gradient of $\kl{\cdot}{\pi}$ at $\pi$, is zero.
This proves the result.
\end{proof}

\section{Application to Bayesian logistic regression}\label{scn:bayesian_logistic}

In this section, we apply our main result to Bayesian logistic regression.
For more applications of our result, such as the possibility of dimension reduction and sampling from non-smooth distributions, we refer the readers to Appendix~\ref{sec:further_appl}.

We recall the setting of Bayesian logistic regression: we observe pairs $(X_i, Y_i)$, $i=1,\dotsc,n$, where $X_i \in \R^d$ and $Y_i \in \{0,1\}$.
The data are assumed to follow the model
\begin{align}\label{eq:logistic_regression}
    Y_i \sim \Ber\Bigl( \frac{\exp{\langle \theta, X_i \rangle}}{1+\exp{\langle \theta, X_i \rangle}} \Bigr), \qquad\text{independently for}~i=1,\dotsc,n.
\end{align}
Here, the parameter $\theta$ itself is assumed to be a random variable taking values in $\R^d$. If we assume that $\theta$ has a prior density $\lambda$ with respect to Lebesgue measure, then the posterior distribution is
\begin{align*}
    \pi(\theta) \propto \lambda(\theta) \exp\Bigl[ \sum_{i=1}^n \bigl(Y_i \, \langle \theta, X_i \rangle - \ln(1+\exp{\langle \theta, X_i\rangle})\bigr) \Bigr].
\end{align*}
Since it may be computationally infeasible to explicitly compute the normalizing constant for the posterior distribution, we turn towards sampling algorithms.

When we take a prior $\lambda$ which has full support on $\R^d$, e.g.\ a Gaussian prior, then we may apply off-the-shelf methods such as the Langevin diffusion~\eqref{eq:langevin}. However, if we choose a prior which has compact support, then the unadjusted Langevin algorithm is no longer an acceptable option because it outputs samples outside the support of the posterior. In this case, we must turn to other methods, such as the projected Langevin algorithm~\cite{bubeck2018sampling}. Here, we explore the use of the mirror-Langevin algorithm~\eqref{mla} for constrained sampling.

For the rest of this section, we will focus on a particular problem for concreteness and interpretability: we consider the uniform prior $\lambda$ on the $\ell_\infty$ ball ${[-1, 1]}^d$.
By duality, this is an attractive model when the data ${(X_i)}_{i=1}^n$ have small $\ell_1$-norm, i.e., are approximately sparse.
A natural choice of mirror map for this problem is the logarithmic barrier
\begin{align*}
    \phi(\theta)
    &= \sum_{i=1}^d \Bigl( \ln \frac{1}{1-\theta[i]} + \ln \frac{1}{1+\theta[i]}\Bigr),
\end{align*}
where we use $\theta[\cdot]$ to denote the coordinates of $\theta \in \R^d$. Then, $\phi$ is $1$-self-concordant (\cite[\S 5.1.3]{nesterov2018lectures}). We remark that the separability of the mirror map in this example implies that the diffusion step \ref{mla2} can be simulated in $O(d)$ steps, rather than $O(d^2)$.

For this setting, we compare the guarantees of \ref{mla} with the Projected Langevin Algorithm (PLA)~\cite{bubeck2018sampling} and the Moreau-Yosida unadjusted Langevin algorithm (MYULA)~\cite{brosse17a}; see Table~\ref{tab:comparison}. The details of the comparison are given in Appendix~\ref{sec:details_for_bayesian_logistic}.

We also perform a numerical experiment to compare the practical performance of~\ref{mla} with PLA. We take $\theta^\star := (0.9, \dotsc, 0.9) \in \R^{10}$ as the ground truth, and we generate $1000$ i.i.d.\ pairs $(X_i, Y_i)$ where $X_i$ is sampled uniformly from the $\ell_1$ ball and $Y_i$ is generated from $X_i$ according to~\eqref{eq:logistic_regression} with $\theta = \theta^\star$. We generate $30$ samples using both~\ref{mla} and PLA (both with step size $\st = 0.005$). At each iteration, we average the samples to obtain an estimate $\theta_k$ for the posterior mean, and we plot the error $\norm{\theta_k - \theta^\star}_2$ in Figure~\ref{fig:blr_parameter_convergence}, averaged over $10$ trials. We implement~\ref{mla2} by performing $10$ inner iterations of an Euler-Maruyama discretization.

\begin{table}[ht]
\begin{minipage}{0.35\textwidth}
    \centering
    \begin{tabular}{cc}
    \toprule
       Algorithm  & Guarantee \\
       \midrule
       \ref{mla}  & $O(d/\epsilon^2)$ \\
       PLA & $O(d^{15}/\epsilon^6)$ \\
       MYULA & $O(d^9/\epsilon^3)$ \\
       \bottomrule
    \end{tabular}
    \vspace{1em}
    \caption{Comparison of \ref{mla} with other constrained sampling algorithms for the number of iterations required to output a sample whose squared total variation distance to $\pi$ is at most $\epsilon$. For simplicity, we focus on the dependence with respect to dimension and the accuracy $\epsilon$, and we defer details of the comparison to Appendix~\ref{sec:details_for_bayesian_logistic}.}
    \label{tab:comparison}
\end{minipage}
\hfill
\begin{minipage}{0.63\textwidth}
    \begin{center}
        \includegraphics[width=0.98\textwidth]{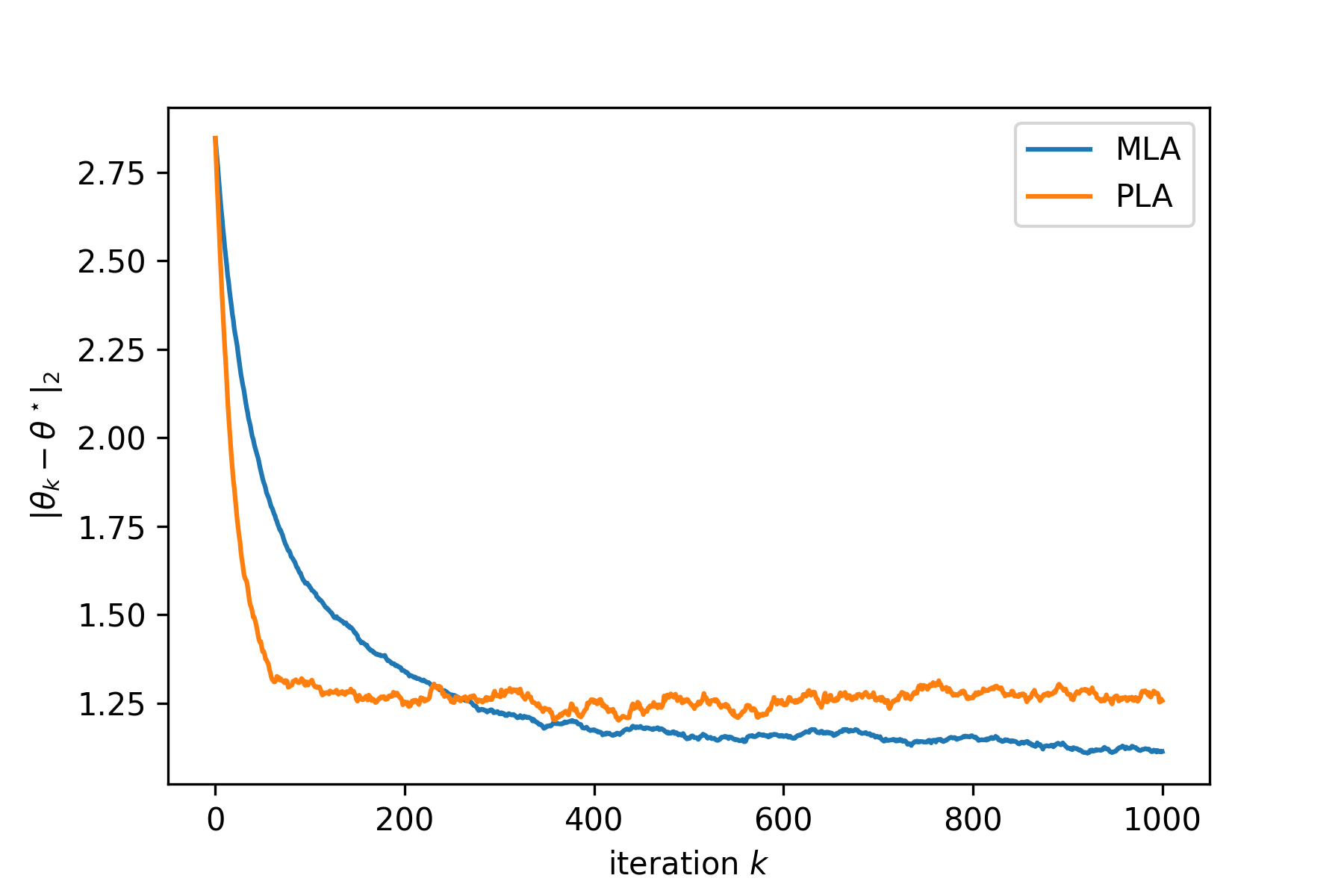}
        \captionof{figure}{We plot the error for estimators of the posterior mean computed using both~\ref{mla} and PLA, each taken with step size $\st = 0.005$.}
        \label{fig:blr_parameter_convergence}
    \end{center}
\end{minipage}
\end{table}

\textbf{Acknowledgments}. Kwangjun Ahn was supported by graduate assistantship from the NSF Grant (CAREER: 1846088) and by Kwanjeong Educational Foundation. Sinho Chewi was supported by the Department of Defense (DoD) through the National Defense Science \& Engineering Graduate Fellowship (NDSEG) Program. We thank Philippe Rigollet for helpful suggestions which greatly improved the presentation of this paper.

\bibliographystyle{alpha}
\bibliography{ref}

\appendix

\section{Open questions}

We conclude by discussing some questions for future research.
\begin{enumerate}
    \item As we discuss in Remark~\ref{rmk:comparison_zhang}, it is an open question to determine if the analyses of~\cite{zhangetal2020wassersteinmirror, lietal2021mirrorlangevin} can be improved to obtain vanishing bias for the Euler-Maruyama discretization of~\ref{eq:mld} under weaker assumptions.
    \item In our work, we analyze the sampling analogue of mirror descent under the assumption that the mirror map is self-concordant. This notably bears resemblance to the development of interior-point methodology in optimization~\citep{nesterov1994interior}, and it is an interesting problem to develop further sampling analogues of interior-point algorithms.
    \item In Appendix~\ref{sec:dimreduc}, we conduct a preliminary investigation into the possibility that~\ref{mla} can alleviate the dependence on dimension for some sampling problems. However, Metropolis-adjusted variants of the Langevin algorithm enjoy significantly better dependence on the dimension as compared to their unadjusted counterparts; see~\cite{roberts1998optimal, pillaistuartthiery2012scalingmala, chewi2020optimal}. 
    Thus, the Metropolis-adjusted version of~\ref{mla} may be a more appropriate setting in which to investigate this dimension reduction question, which we leave to future work.
\end{enumerate}

\section{Details on relative convexity and smoothness} \label{app:relative}

For the reader's convenience, we list basic facts regarding relative convexity and smoothness.

  \begin{proposition} [{\cite[Proposition 1.1]{lufreundnesterov2018relativelysmooth}}]
    The following conditions are equivalent:
    \begin{itemize}
        \item $f$ is $\beta $-smooth relative to $h$.
        \item $\beta h-f$ is convex on $Q$.
        \item Under twice differentiability, $\n^2 f(x) \preceq \beta \n^2 h(x)  $ for any $x\in \mathrm{int}(Q)$.
        \item $\inp{\nabla f(x) -\nabla f(y)}{x-y} \leq \beta \inp{\nabla h(x) -\nabla h(y)}{x-y}$ for all  $x,y\in \inte (Q)$.
    \end{itemize}
    Furthermore, the following conditions are equivalent:
    \begin{itemize}
        \item $f$ is $\alpha $-convex relative to $h$.
        \item $f-\alpha h$ is convex on $Q$.
        \item Under twice differentiability, $\n^2 f(x) \succeq \alpha  \n^2 h(x)  $ for any $x\in \mathrm{int}(Q)$.
        \item $\inp{\nabla f(x) -\nabla f(y)}{x-y} \geq \alpha \inp{\nabla h(x) -\nabla h(y)}{x-y}$ for all  $x,y\in \inte (Q)$.
    \end{itemize}
    \end{proposition}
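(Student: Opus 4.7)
The plan is to unify both cycles of equivalences by introducing an auxiliary function and reducing everything to the textbook characterizations of convexity for differentiable functions. For the smoothness half, I will set $g := \beta h - f$; for the relative convexity half, I will set $g := f - \alpha h$. Each of the four listed conditions then becomes one of the following familiar characterizations of convexity of $g$ on the (open) convex set $Q$: (i) the tangent-hyperplane inequality $g(y) \geq g(x) + \langle \nabla g(x),\, y-x\rangle$; (ii) convexity of $g$ itself; (iii) $\nabla^2 g \succeq 0$ under twice differentiability; and (iv) monotonicity $\langle \nabla g(x) - \nabla g(y),\, x-y\rangle \geq 0$. The only substantive ingredient required from outside is the classical fact that these four properties are equivalent for a $C^1$ (resp.\ $C^2$) function on an open convex set, so the proof is mostly a matter of unpacking definitions.

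For the smoothness half, I would first carry out the algebraic rearrangement that connects the ``relative'' statements to the plain convexity statements. Unpacking the tangent-hyperplane inequality for $g = \beta h - f$ at a pair $(x,y)$ gives
\begin{align*}
\beta h(y) - f(y) \;\geq\; \beta h(x) - f(x) + \langle \beta\nabla h(x) - \nabla f(x),\, y-x\rangle,
\end{align*}
which rearranges exactly to $f(y) \leq f(x) + \langle \nabla f(x),\, y-x\rangle + \beta\, D_h(y,x)$, i.e., $\beta$-smoothness of $f$ relative to $h$; this gives the equivalence between the first two items. The Hessian characterization then follows from $\nabla^2 g = \beta\nabla^2 h - \nabla^2 f \succeq 0$, and the monotonicity characterization follows from $\langle \nabla g(x) - \nabla g(y),\, x-y\rangle \geq 0$ together with $\nabla g = \beta\nabla h - \nabla f$.

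The relative convexity half is entirely parallel: setting $g := f - \alpha h$ and running the same four translations yields each of the stated equivalences, with all of the inequalities reversed (since convexity of $g$ now corresponds to a \emph{lower} bound on $f$ rather than an upper bound). The main (and rather mild) obstacle is merely bookkeeping: one must remember that $Q$ is convex and that $f, h$ are differentiable (respectively, twice differentiable for the Hessian characterization) on $\inte(Q)$, both of which are implicit in the framework of the paper. No Bregman transport machinery, self-concordance, or optimal transport is needed here; I expect the author's proof to amount essentially to this reduction, possibly with a pointer to the cited reference \cite{lufreundnesterov2018relativelysmooth} for details.
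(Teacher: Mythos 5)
Your reduction is correct and is exactly the standard argument: setting $g=\beta h-f$ (resp.\ $g=f-\alpha h$), each listed condition is one of the four classical characterizations of convexity of a differentiable function on a convex set, and the algebraic unpacking you give of the tangent-plane inequality into the relative smoothness/convexity definition is right. The paper itself supplies no proof of this proposition---it is imported verbatim from the cited reference of Lu, Freund, and Nesterov, whose proof amounts to the same reduction you describe---so there is nothing in the paper for your argument to diverge from.
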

    
\section{Proof of the convergence rates}\label{scn:proof_conv}

\subsection{Per-iteration progress bound}\label{sec:per}

 For the convergence rates of \ref{mla}, we first prove the following per-iterate progress bound, from which Theorems~\ref{thm:w} and~\ref{thm:s} will be easily deduced.
\begin{lemma}[Per-iteration progress bound] \label{lem:master}
Assume $\beta>0$. For $0\leq \st \leq \frac{1}{\beta}$, let $X_k\sim \mu_k$ be the iterates of \ref{mla} with step size $\st$. Then, under Assumptions~\ref{assump:1}-\ref{assump:3}, the following holds:
\begin{align}\label{ineq:comb}
       \st \, \kl{\mu_{k+1}}{\pi} \leq \left(1-\alpha \st\right) \wass{\pi}{\mu_k} - \wass{\pi}{\mu_{k+1}} + (\beta + 2M_\phi L) d \st^2\,.
\end{align}
\end{lemma}
\begin{proof}
We decompose the KL divergence into two parts:
\begin{align*}
    \kl{\mu}{\pi} = \underbrace{\int_\sQ V(x) \, \D\mu(x)}_{=:\ee(\mu)} + \underbrace{\int_\sQ \mu(x) \ln \mu(x) \, \D x }_{=:\hh(\mu)}\,.
\end{align*}
Here and throughout the paper, we abuse notation by identifying a measure $\mu$ with its density.

The first term above has the interpretation of \emph{energy}, while the second term has the interpretation of (negative) \emph{entropy}.
The basic scheme of the proof follows the method in~\cite{durmus2019analysis}, which views the two steps of the update rule~\ref{mla} as alternately dissipating the energy and the entropy. 
 More specifically, we will show that \ref{mla1} dissipates $\ee$ and  \ref{mla2} dissipates $\hh$, while the two steps do not badly interfere with each other.
 
Our analysis proceeds by controlling each term in the following decomposition:
 \begin{align*}
      \kl{\mu_{k+1}}{\pi} &= \ee(\mu_{k+1}) + \hh(\mu_{k+1}) -\ee(\pi) - \hh(\pi)\\
      &= \underbrace{\ee(\mu_{k+1/2})-\ee(\pi)}_{\circled{1}} +\underbrace{\ee(\mu_{k+1})-\ee(\mu_{k+1/2})}_{\circled{2}} + \underbrace{\hh(\mu_{k+1}) -\hh(\pi)}_{\circled{3}}\,.
 \end{align*}
 Before we go into the analysis of each term, we outline our proof strategy.
 Term {\tiny \circled{1}} corresponds to a deterministic step of the mirror descent algorithm, and we  adapt the analysis of mirror descent based on the Bregman proximal inequality~\citep[Lemma 3.2]{chen1993convergence}.
 
 For terms {\tiny \circled{2}} and {\tiny \circled{3}}, it will be important to understand the stochastic process ${(Z_t)}_{t\in [0,\eta]}$ in~\ref{mla}, where $Z_t := \nabla \phi^\star(W_t)$, along with the corresponding marginal laws ${(\nu_t)}_{t\in [0,\eta]}$. There are two important and distinct perspectives we can adopt. On one hand, the stochastic process ${(Z_t)}_{t\in [0,\eta]}$ is a diffusion, and can be studied via stochastic calculus. On the other hand, the laws ${(\nu_t)}_{t\in [0,\eta]}$ follow a Wasserstein ``mirror flow'' of the entropy functional $\hh$, in the sense that it evolves continuously in Wasserstein space with tangent vector $-{[\nabla^2 \phi]}^{-1} \nabla_{W_2} \hh(\nu_t)$ (see Section~\ref{scn:background} for a brief introduction to Wasserstein calculus, and~\cite{chewietal2020mirrorlangevin} for a discussion of~\ref{eq:mld} from this perspective). In turn, these two perspectives offer different calculation rules: stochastic calculus provides It\^o's formula (see~\cite[Theorem 5.10]{legall2016stochasticcalc} or~\cite[\S 3.3]{stroock2018stochasticcalc}), while Wasserstein calculus provides the rule
 \begin{align*}
     \frac{\D}{\D t} \eu F(\nu_t)
     &= -\ex \langle \nabla_{W_2} \eu F(\nu_t)(Z_t), {[\nabla^2 \phi(Z_t)]}^{-1} \nabla_{W_2} \hh(\nu_t)(Z_t) \rangle,
 \end{align*}
 for any sufficiently well-behaved functional $\eu F$ on Wasserstein space. Both of these perspectives are insightful, and we will employ both.
 
For term {\tiny \circled{2}}, we show that~\ref{mla2} does not greatly increase the energy, and we accomplish this via calculations using It\^{o}'s formula together with the relative smoothness and self-concordance assumptions.
 Finally, we control term {\tiny \circled{3}} by developing a new displacement convexity result (Theorem~\ref{thm:ent_cvx_bregman}) for the entropy functional $\hh$, which is crucial for applying Wasserstein calculus.
 \begin{itemize}
     \item[\circled{1}:]  Let  $Y $ be a random variable (defined on the same probability space) which is distributed according to $\pi$. Then,
\begin{align}
    \ee(\mu_{k+1/2}) - \ee(\pi)
    &= \ex[V(X_{k+1/2})] - \ex[V(Y)] \nonumber\\
    &= \ex[V(X_{k+1/2})]-\ex[V(X_{k})]+\ex[V(X_{k})] - \ex[V(Y)] \nonumber\\
    &\leq \ex[\inp{\nabla V(X_k)}{X_{k+1/2}-X_k} + \beta  \breg{X_{k+1/2}}{X_k}] \nonumber\\
    &\qquad{} + \ex[ \inp{\nabla V(X_k)}{X_k-Y} -\alpha \breg{Y}{X_k}] \nonumber\\
    &= \ex[\langle \nabla V(X_k), X_{k+1/2}-Y \rangle +  \beta  \breg{X_{k+1/2}}{X_k}-\alpha \breg{Y}{X_k}]\,,  \label{m:last:1}
\end{align}
where the inequality follows due to the $\alpha$-relative strong convexity and $\beta$-relative smoothness of $V$. 
Now to control \eqref{m:last:1}, we invoke a standard tool from optimization:

\begin{lemma}[{Bregman proximal inequality~\citep[Lemma 3.2]{chen1993convergence}}] \label{bproxineq} 
For a convex function $f$ and a convex function $\phi$ of Legendre type, suppose that
\begin{align*}
    x_{+}:=\argmin_{z\in\sQ} {[f(z)+\breg{z}{x}]}\,.
\end{align*}
Then,
\begin{align*}
    f(x_+) -f(y) \leq  \breg{y}{x} -\breg{y}{x_+}- \breg{x_+}{x}\qquad \forall y\in \sQ\,.
\end{align*}
\end{lemma}

Applying the Bregman proximal inequality (Lemma~\ref{bproxineq}) with $f(x) = \st \inp{\nabla V(X_k)}{x}$,
\begin{align*}
    \eqref{m:last:1} &\leq \ex\Bigl[ \bigl(\frac{1}{\st}-\alpha\bigr)\breg{Y}{X_k} -\frac{1}{\st}\breg{Y}{X_{k+1/2}}   +\bigl( \beta -\frac{1}{\st}\bigr) \breg{X_{k+1/2}}{X_k} \Bigr]\\
     &\leq \ex\Bigl[  \bigl(\frac{1}{\st}-\alpha\bigr)\breg{Y}{X_k} -\frac{1}{\st}\breg{Y}{X_{k+1/2}} \Bigr]\,,
\end{align*}
provided that $\frac{1}{\st}\geq \beta \Leftrightarrow \st \leq \beta^{-1}$. Choosing $Y$ so that the coupling $(Y,X_k)$ minimizes $\ex[\breg{Y}{X_k}]$, we obtain 
\begin{align*}
    \st \, \{  \ee(\mu_{k+1/2}) - \ee(\pi) \} & \leq \left(1-\alpha \st \right)\wass{\pi}{\mu_k} -\ex[ \breg{Y}{X_{k+1/2}} ]\\
    &\leq \left(1-\alpha \st \right) \wass{\pi}{\mu_k} - \wass{\pi}{\mu_{k+1/2}}\,.
\end{align*}
\item[\circled{2}:]
First, note  from \ref{mla} that 
\begin{align*}
    \ee(\mu_{k+1})-\ee(\mu_{k+1/2})=\ex\bigl[V\bigl(\nabla \phi^\star (\bx_{\st})\bigr) -V\bigl(\nabla \phi^\star(\bx_0)  \bigr)\bigr] \,.
\end{align*}
To compute the above term, we define $f(x):= V(\nabla \phi^\star ( x))$ and apply It\^{o}'s formula to the random variable $f(\bx_\st)-f(\bx_0)$.
To that end, we first compute the Hessian of $f$:
\begin{align*}
    \nabla f(x) &= \nabla V(\nabla\phi^\star(x))^\top \nabla^2 \phi^\star(x)= \nabla V(\nabla\phi^\star(x))^\top {[\nabla^2 \phi(\nabla\phi^\star(x))]}^{-1}\,,\\
    \nabla^2 f (x)    &= \nabla^2 V(  \nabla\phi^\star(x)) \, {[\nabla^2 \phi(\nabla\phi^\star(x))]}^{-1} \, {[\nabla^2 \phi^\star(x)]}\\
    &\qquad{}+
    \nabla V(  \nabla\phi^\star(x))^\top \, {[\nabla^2 \phi(\nabla\phi^\star(x))]}^{-1}\, [\nabla^3\phi(\nabla\phi^\star(x))]\, {[\nabla^2 \phi(\nabla\phi^\star(x))]}^{-2}\,.
\end{align*}
It\^o's formula now decomposes $f(\bx_\st) - f(\bx_0)$ into the sum of an integral and a stochastic integral.
Intuitively, the stochastic integral has mean zero (since it is a local martingale), and this can be rigorously argued using the standard technique of localization; we give the argument at the end of this step.
Thus, we concentrate on the expectation of the first term.
Writing $\bbx_t:= \nabla \phi^\star (\bx_t)$, the above Hessian calculation gives
\begin{align}
    &\ex[f(\bx_\eta)-f(\bx_0)] \\
    &\qquad = \ex\Bigl[\int_0^\st \bigl\langle \nabla^2 V(  \bbx_t ) \, {[\nabla^2 \phi(\bbx_t) ]}^{-2},  \nabla^2 \phi(\bbx_t) \bigr\rangle \, \D t \Bigr] \nonumber \\
    &\qquad\qquad\qquad{}+ \ex\Bigl[\int_0^\st \bigl\langle \nabla V(  \bbx_t) ^\top \, {[\nabla^2 \phi(\bbx_t)]}^{-1} \, [\nabla^3\phi(\bbx_t)]\, {[\nabla^2 \phi(\bbx_t)]}^{-2}, \nabla^2 \phi(\bbx_t) \bigr\rangle \, \D t \Bigr] \nonumber \\
    &\qquad= \ex\Bigl[\int_0^\st \bigl\langle \nabla^2 V(  \bbx_t ), {[\nabla^2 \phi(\bbx_t) ]}^{-1} \bigr\rangle \, \D t \Bigr] \label{term:2}\\
    &\qquad\qquad\qquad{}+ \ex\Bigl[\int_0^\st \tr\bigl(\nabla V(  \bbx_t)^\top  \, {[\nabla^2 \phi(\bbx_t)]}^{-1} \, [\nabla^3\phi(\bbx_t)] \, {[\nabla^2 \phi(\bbx_t)]}^{-1}\bigr) \, \D t\Bigr] \label{term:1}\,.
\end{align}
We can control~\eqref{term:2} easily based on the relative smoothness of $V$: indeed, since $\nabla^2 V \preceq  \beta\nabla^2 \phi$ (see Appendix~\ref{app:relative}),
\begin{align*}
    \eqref{term:2} \leq \beta d\eta \,.
\end{align*}
To control~\eqref{term:1}, we use the self-concordance of $\phi$.
We recall here the following result:

\begin{proposition}[{\cite[Corollary 5.1.1]{nesterov2018lectures}}] \label{prop:self}A function $\phi$ is self-concordant with a constant $M_\phi\geq 0$ if and only if for any $x\in \dom(\phi)$ and any direction $u\in \re^n$ we have
\begin{align*}
    \nabla^3 \phi(x) u \preceq 2M_\phi \, \norm{u}_{\nabla^2 \phi(x)} \, \nabla^2 \phi(x)\,.
\end{align*}
\end{proposition}

Using Proposition~\ref{prop:self}, it follows that
\begin{align*}
    \eqref{term:1} &\leq 2M_\phi \int_0^\st \ex\bigl[ \bigl\lVert {[\nabla^2 \phi(\bbx_t)]}^{-1}\nabla V(  \bbx_t) \bigr\rVert_{\nabla^2 \phi(\bbx_t)} \, \tr\bigl( [\nabla^2 \phi(\bbx_t)]\, {[\nabla^2 \phi(\bbx_t)]}^{-1}\bigr) \bigr] \, \D t\\
    &\leq 2M_\phi d \int_0^\st \ex\bigl[ \lVert \nabla V(\bbx_t) \rVert_{{[\nabla^2 \phi(\bbx_t)]}^{-1}} \bigr] \, \D t
    \le 2M_\phi Ld\eta.
\end{align*}
Thus, our calculation shows that
\begin{align}\label{eq:step2_bd}
    \ee(\mu_{k+1})-\ee(\mu_{k+1/2})
    &\le (\beta + 2M_\phi L) d\eta.
\end{align}
We now sketch the localization argument. Let ${(\tau_\ell)}_{\ell\in\N}$ be a localizing sequence for ${(\bx_t)}_{t\in [0,\eta]}$. The argument above may be applied rigorously for the stopped process ${(\bx_{t\wedge \tau_\ell})}_{t\in [0,\eta]}$ to obtain $\ex V(\bbx_{\eta \wedge \tau_\ell}) - \ex V(\bbx_0) \le (\beta + 2M_\phi L) d\eta$.
Since $V$ is bounded below, we use Fatou's lemma to pass $\ell\to\infty$ and deduce~\eqref{eq:step2_bd}.

 \item[\circled{3}:] 
Let $\nu_t$ denote the law of $\bbx_t:= \nabla \phi^\star (\bx_t)$. For this step, we start with a calculation of the  derivative of $t\mapsto \wass\pi{\nu_t}$.
Noting that $\nabla_2 \breg{y}{x} = -\nabla^2 \phi(x) \, (y-x)$ and that $\nu_t$ follows the Wasserstein tangent vector $-{[\nabla^2 \phi]}^{-1} \nabla_{W_2} \hh(\nu_t)$, we expect that
\begin{align*}
    \frac{\D}{\D t} \wass\pi{\nu_t}
    &= \ex\bigl\langle {[\nabla^2 \phi(\bbx_t)]}^{-1} \nabla_{W_2} \hh(\nu_t)(\bbx_t), \nabla^2 \phi(\bbx_t) \, (Y-\bbx_t) \bigr\rangle \\
    &= \ex\langle \nabla_{W_2} \hh(\nu_t)(\bbx_t), Y-\bbx_t \rangle,
\end{align*}
where $(Y, \bbx_t)$ are optimally coupled for $\pi$ and $\nu_t$ for the Bregman transport cost.
In general, the differentiability properties of optimal transport costs can be quite subtle, but thankfully it is much easier to establish the superdifferentiability
\begin{align*}
    \frac{\D}{\D t^+} \wass\pi{\nu_t}
    &\le \ex\langle \nabla_{W_2} \hh(\nu_t)(\bbx_t), Y-\bbx_t \rangle
\end{align*}
at almost all $t$, which is all that will be needed for the subsequent argument. The superdifferentiability result is proven along the lines of~\cite[Lemma 2]{ottovillani2000lsi}; see also~\cite[Theorem 10.2.2]{ambrosio2008gradient} or the proof of~\cite[Theorem 23.9]{villani2009ot}.
 
Next, we apply a result which can be interpreted as convexity of the entropy functional with respect to the Bregman divergence; it will be given as Theorem~\ref{thm:ent_cvx_bregman} in the next section.
It implies that for $t \in [0,\eta]$,
\begin{align*}
    \frac{\D}{\D t^+} \wass\pi{\nu_t}
    &\le \ex\langle \nabla_{W_2} \hh(\nu_t)(\bbx_t), Y-\bbx_t \rangle
    \leq \hh(\pi) - \hh(\nu_t)
    \le \hh(\pi) - \hh(\nu_\eta)\,,
\end{align*}
where the last inequality follows since
\begin{align*}
\frac{\D}{\D t} \hh(\nu_t) = - \ex\bigl[\bigl\langle \nabla_{W_2} \hh(\nu_t)(\bbx_t), {[\nabla^2 \phi(\bbx_t)]}^{-1} \, \nabla_{W_2} \hh(\nu_t)(\bbx_t) \bigr\rangle\bigr] \leq 0\,,
\end{align*}
which implies $ \hh(\nu_{\st}) \leq \hh(\nu_t)$ for any $t\in[0,\st]$.
Integrating from $0$ to $\st$, we obtain
\begin{align*}
\wass{\pi}{\nu_{\st}}- \wass{\pi}{\nu_{0}}\leq     \st \, \{ \hh(\pi)- \hh(\nu_\eta)\} \,,
\end{align*}
which is the same as
\begin{align*}
\eta \, \{ \hh(\mu_{k+1})- \hh(\pi) \}
&\le \wass{\pi}{\mu_{k+1/2}} - \wass{\pi}{\mu_{k+1}}\, .
\end{align*}
 \end{itemize}
 Combining the upper bounds from {\tiny \circled{1}}, {\tiny \circled{2}}, and {\tiny \circled{3}}, the proof of Lemma~\ref{lem:master} is complete.
\end{proof}

 \subsection{Proof of Theorem~\ref{thm:w}} \label{pf:thm:w}
 From the per-iteration progress bound (Lemma~\ref{lem:master}), we have for any $k\in \N$
\begin{align} \label{eq:1}
     \st \,  \kl{\mu_{k+1}}{\pi} \leq \wass{\pi}{\mu_k} - \wass{\pi}{\mu_{k+1}} + \beta' d \st^2\,.
\end{align}
Summing \eqref{eq:1} over $k=0,1,\dots, N-1$,  
\begin{align*}
    \st \sum_{k=1}^N  \kl{\mu_{k}}{\pi} \leq \wass{\pi}{\mu_0} - \wass{\pi}{\mu_N} + \beta' d \st^2 N\,.
\end{align*}
Using the convexity of the KL divergence~\citep[which follows from the Gibbs variational principle; see][\S 5.1]{rassoulaghaseppalainen2015largedeviations},
\begin{align*}
     \kl{\avg_{N}}{\pi} \leq \frac{\wass{\pi}{\mu_0}}{N\st }  + \beta' d \st  \leq \frac{\eps}{2}+\frac{\eps}{2}\,,
\end{align*} 
where the last inequality follows from the choice $N\geq \frac{2 \wass{\pi}{\mu_0}}{\st\eps}$ and $\st \leq \frac{\eps}{2\beta d} $.

\subsection{Proof of Theorem~\ref{thm:s}}\label{pf:thm:s}
Let us first prove the convergence in Bregman transport cost.
For any $k\in \N$, the per-iteration progress bound (Lemma~\ref{lem:master}) together with the fact $\kl{\mu_{k+1}}{\pi} \geq 0$ imply
\begin{align}\label{eq:2}
\wass{\pi}{\mu_{k+1}}       \leq (1-\alpha \st)\,\wass{\pi}{\mu_k} + \beta' d \st^2\,.
\end{align}
Recursively applying~\eqref{eq:2} for $k=0,1,\dots, N-1$, we obtain
\begin{align*}
\wass{\pi}{\mu_{N}}       &\leq {(1-\alpha \st)}^{N}\, \wass{\pi}{\mu_0} + \beta' d \st^2 \sum_{k=0}^{N-1} {(1-\alpha\st )}^k\\
&\leq {(1-\alpha \st)}^{N}\, \wass{\pi}{\mu_0} + \beta' d \st^2 \sum_{k=0}^{\infty} {(1-\alpha\st )}^k\\
&\le \exp(-\alpha \eta N)\, \wass{\pi}{\mu_0} + \frac{\beta' d \st}{\alpha}
\leq \frac{\eps}{2}+\frac{\eps}{2}\,,
\end{align*}
where the last inequality follows since $N\geq \frac{1}{\alpha\st} \, \ln \frac{2\wass{\pi}{\mu_0}}{\eps}$ and $\st \leq \frac{\alpha\eps}{2\beta' d}$.
Having proved  the convergence in terms of the Bregman transport cost, the convergence in terms of the KL divergence follows by applying Theorem~\ref{thm:w}.

\subsection{Analysis for the non-smooth case (Theorem~\ref{thm:w:n})}
\label{pf:thm:w:n}

The analysis for the non-smooth case proceeds in a similar manner to the smooth case.
We first prove the following per-iterate progress bound.

\begin{lemma}[Per-iteration progress bound; non-smooth case] \label{lem:master:nonsmooth}
Let $X_k\sim \mu_k$ be the iterates of \ref{mla} with step size $\st>0$. Assume that $\phi$ is $1$-strongly convex w.r.t $\norm{\cdot}$, and that $V$ is convex and $\lip$-Lipschitz w.r.t $\norm{\cdot}$. Then, the following holds:
\begin{align}\label{ineq:comb:nonsmooth}
       \st \, \kl{\mu_{k+1}}{\pi} \leq   \wass{\pi}{\mu_{k+1/2}} - \wass{\pi}{\mu_{k+3/2}} + \frac{\st^2\lip^2}{2}\,.
\end{align}
\end{lemma}
\begin{proof} 
Our analysis proceeds by controlling each term in the following decomposition:
 \begin{align*}
      \kl{\mu_{k+1}}{\pi}        &= \underbrace{\ee(\mu_{k+1})-\ee(\pi)}_{\circled{A}}   + \underbrace{\hh(\mu_{k+1}) -\hh(\pi)}_{\circled{B}}\,.
 \end{align*}
 For term {\tiny \circled{B}}, we invoke the following upper bound (from the analysis of term {\tiny \circled{3}} in the proof of Lemma~\ref{lem:master}):
 \begin{align}
\eta \, \{ \hh(\mu_{k+1})- \hh(\pi) \}
&\le \wass{\pi}{\mu_{k+1/2}} - \wass{\pi}{\mu_{k+1}}\, .\label{bound:2p}
\end{align}
Let us turn to {\tiny \circled{A}}, and let $Y $ be a random variable (defined on the same probability space) which is distributed according to $\pi$.
Since we have 
\begin{align*}
    X_{k+3/2} = \argmin_{x\in \sQ}\left[
	    \inp{\st \nabla V(X_{k+1}) }{x} + \breg{x}{X_{k+1}}\right]\,,
\end{align*}
applying the Bregman proximal inequality (Lemma~\ref{bproxineq}) with $f(x) = \st \inp{\nabla V(X_{k+1})}{x}$ gives  
\begin{align*}
  \st \,\langle\nabla V(X_{k+1}), X_{k+3/2}-Y\rangle \leq \breg{Y}{X_{k+1}}-\breg{Y}{X_{k+3/2}} -\breg{X_{k+3/2}}{X_{k+1}}\,,
\end{align*}
which after rearranging becomes
\begin{align}
\breg{Y}{X_{k+3/2}}-\breg{Y}{X_{k+1}}   \leq \st \,\langle \nabla V(X_{k+1}), Y-X_{k+3/2}\rangle -\breg{X_{k+3/2}}{X_{k+1}}\,. \label{term:int}
\end{align}
On the other hand, the right hand side of \eqref{term:int} can be controlled using the convexity, the Lipschitzness of $V$, and strong convexity of $\phi$:
\begin{align*}
    &\text{RHS of }\eqref{term:int} \\
    &\qquad =
    \st \,\langle \nabla V(X_{k+1}), Y-X_{k+1}\rangle +
    \st\, \langle \nabla V(X_{k+1}), X_{k+1}-X_{k+3/2}\rangle  -\breg{X_{k+3/2}}{X_{k+1}}\\
    &\qquad \leq 
    \st\, [ V(Y)- V(X_{k+1})]+
    \st \,\norm{\nabla V(X_{k+1})}_\star\, \norm{X_{k+1}-X_{k+3/2}} -\frac{1}{2}\, \norm{X_{k+1}-X_{k+3/2}}^2 \\
    &\qquad \leq 
    \st\, [ V(Y)- V(X_{k+1})]+
    \frac{\st^2}{2}\, \norm{\nabla V(X_{k+1})}_\star^2 \\ 
    &\qquad \leq 
    \st \, [ V(Y)- V(X_{k+1})]+
    \frac{\st^2\lip^2}{2}\,.
\end{align*} 
For the LHS of \eqref{term:int}, choose $Y$ so that the coupling $(Y,X_{k+1})$ minimizes $\ex[\breg{Y}{X_{k+1}}]$ to obtain 
\begin{align*}
    \ex[\text{LHS of }\eqref{term:int}] = \ex[\breg{Y}{X_{k+3/2}}]-\wass{\pi}{\mu_{k+1}}
     &\geq \wass{\pi}{\mu_{k+3/2}}-\wass{\pi}{\mu_{k+1}}\,.
\end{align*}
Combining these upper and lower bounds, \eqref{term:int} becomes:
\begin{align*}
  \st \,[  \ee(\mu_{k+1}) -\ee(\pi)]\leq \wass{\pi}{\mu_{k+1}} - \wass{\pi}{\mu_{k+3/2}} + \frac{\st^2\lip^2}{2}\,. 
\end{align*}
Together with \eqref{bound:2p}, the proof is complete.
\end{proof}
Now using Lemma~\ref{lem:master:nonsmooth}, we prove Theorem~\ref{thm:w:n}.

\medskip{}
\begin{proof}[Proof of Theorem~\ref{thm:w:n}]
From  Lemma~\ref{lem:master:nonsmooth}, we have for any $k\in \N$
\begin{align} \label{eq:1:n}
     \st \, \kl{\mu_{k+1}}{\pi} \leq   \wass{\pi}{\mu_{k+1/2}} - \wass{\pi}{\mu_{k+3/2}} + \frac{\st^2\lip^2}{2}\,.
\end{align}
Summing~\eqref{eq:1:n} over $k=0,1,\dots, N-1$,  
\begin{align*}
    \st \sum_{k=1}^N  \kl{\mu_{k}}{\pi} \leq \wass{\pi}{\mu_{1/2}} - \wass{\pi}{\mu_{N+1/2}} + \frac{\st^2\lip^2}{2}\,N\,.
\end{align*}
Again using the convexity of the KL divergence, we obtain
\begin{align*}
     \kl{\avg_{N}}{\pi} \leq \frac{\wass{\pi}{\mu_{1/2}}}{N\st } +\frac{\st\lip^2}{2}   \leq \frac{\eps}{2}+\frac{\eps}{2}\,,
\end{align*} 
where the last inequality follows from the choice $N\geq \frac{2 \wass{\pi}{\mu_{1/2}}}{\st\eps}$ and $\st \leq \frac{\eps}{\lip^2} $. 
\end{proof}

\section{Proofs for the convexity of entropy}


To prove Theorem~\ref{thm:ent_cvx_bregman}, we will use the known result about the convexity of $\hh$ along  generalized geodesics~\citep[Theorem 9.4.11]{ambrosio2008gradient}. 
To that end, the first step is to obtain a characterization of the optimal Bregman transport coupling which is analogous to Brenier's theorem.
The following theorem is of independent interest:

\begin{theorem}[Brenier's theorem for the Bregman transport cost]\label{thm:bregman_brenier}
Let $\mu$, $\nu$ be probability measures on $\R^d$.
The optimal Bregman transport coupling $(X,Y)$ for $\mu$ and $\nu$ is of the form
\begin{align*}
    \nabla \phi(X) - \nabla \phi(Y) = \nabla h(X),
\end{align*}
where $h : \R^d\to\R\cup\{-\infty\}$ is such that $\phi - h$ is convex..
\end{theorem}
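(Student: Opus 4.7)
The plan is to reduce the statement to the classical Brenier theorem via the change of variables $y \mapsto p := \nabla \phi(y)$. The key algebraic observation is the Fenchel identity $\phi^\star(\nabla\phi(y)) = \langle \nabla\phi(y), y\rangle - \phi(y)$, which rewrites the Bregman cost as
\[
  \breg{x}{y} = \phi(x) + \phi^\star(\nabla\phi(y)) - \langle x, \nabla\phi(y)\rangle.
\]
Since $\phi$ is of Legendre type, $\nabla\phi$ is a bijection from $\inte(\dom(\phi))$ onto $\inte(\dom(\phi^\star))$; in particular, letting $\tilde\nu := (\nabla\phi)_\#\nu$, the map $(X,Y) \mapsto (X, P) := (X, \nabla\phi(Y))$ is a bijection between couplings of $(\mu,\nu)$ and couplings of $(\mu, \tilde\nu)$.

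Under this correspondence, the identity above yields
\[
  \ex \breg{X}{Y} = \ex[\phi(X)] + \ex[\phi^\star(P)] - \ex\langle X, P\rangle,
\]
and the first two summands depend only on the marginals $\mu$ and $\tilde\nu$. Therefore minimizing $\ex \breg{X}{Y}$ over couplings of $(\mu,\nu)$ is equivalent to \emph{maximizing} $\ex\langle X, P\rangle$ over couplings of $(\mu, \tilde\nu)$, i.e., to the classical quadratic optimal transport problem between $\mu$ and $\tilde\nu$.

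Next I would invoke the Brenier/Knott--Smith theorem: under the standard regularity assumption that $\mu$ is absolutely continuous with respect to Lebesgue measure, the optimizing coupling $(X, P)$ is concentrated on the graph of $\nabla g$ for some convex $g : \R^d \to \R\cup\{+\infty\}$, i.e., $P = \nabla g(X)$ almost surely. Substituting back $P = \nabla\phi(Y)$ and setting $h := \phi - g$ yields
\[
  \nabla\phi(X) - \nabla\phi(Y) = \nabla\phi(X) - \nabla g(X) = \nabla h(X),
\]
with $\phi - h = g$ convex, which is the desired conclusion. For general $\mu$ lacking absolute continuity, the same conclusion should hold at the level of the optimal plan by the usual $c$-cyclical monotonicity/$c$-concavity characterization: the support of any optimizer in the $\langle X, P\rangle$ formulation lies in the subdifferential graph of a convex function $g$, giving the identity as an almost-sure relation along the optimal coupling.

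The main technical obstacle I anticipate is justifying the appeal to Brenier in the transformed problem: one needs enough integrability (finite second moments for $\mu$ and $\tilde\nu$, which should follow from $\mu,\nu$ being supported in $\inte(\dom(\phi))$) for the quadratic cost problem to be well posed, and enough regularity for the resulting Brenier potential $g$ to be differentiable $\mu$-a.e. so that $h = \phi - g$ is meaningful where needed. These verifications are standard in the optimal transport literature and can either be handled by routine approximation arguments or discharged using the cyclical monotonicity framework directly, which sidesteps the differentiability issue altogether.
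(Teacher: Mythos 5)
Your proposal is correct in substance but follows a genuinely different route from the paper. The paper applies the general duality theory for arbitrary cost functions directly to the Bregman cost: it cites the structure theorem for optimal plans (Villani, Theorem 10.28) to get $\nabla_1 D_\phi(X,Y) = \nabla h(X)$ with $h$ a $D_\phi$-concave function, and the only computation needed is to expand the definition of $D_\phi$-concavity and observe that $\phi - h$ is a supremum of affine functions, hence convex; the regularity hypotheses (injectivity of $\nabla_1 D_\phi(x,\cdot)$, $\mu$-a.e.\ differentiability of $h$, with $\mu$ absolutely continuous) are checked in a footnote. You instead exploit the algebraic identity $\breg{x}{y} = \phi(x) + \phi^\star(\nabla\phi(y)) - \langle x, \nabla\phi(y)\rangle$ to transform the problem by the bijection $Y \mapsto P = \nabla\phi(Y)$, reducing it to the classical quadratic-cost Brenier/Knott--Smith theorem for $(\mu, (\nabla\phi)_\#\nu)$; this makes transparent why the result deserves the name ``Brenier's theorem'' and only invokes the classical statement rather than the general $c$-concavity machinery. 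Two caveats: the step ``the first two summands depend only on the marginals'' presupposes $\ex[\phi(X)]$ and $\ex[\phi^\star(P)]$ are finite, otherwise the three-term split risks an $\infty - \infty$; and finite second moments do \emph{not} follow merely from $\mu,\nu$ being supported in $\inte(\dom(\phi))$, which can be unbounded. Both issues are bypassed by the cyclical-monotonicity variant you sketch at the end (the marginal terms cancel pointwise in cyclical sums, so $D_\phi$-cyclical monotonicity of the support is equivalent to classical cyclical monotonicity of the transformed support, and Rockafellar's theorem plus $\mu \ll \mathrm{Leb}$ gives $P = \nabla g(X)$ a.s.), so with that route made primary the argument is complete; note that the absolute-continuity assumption on $\mu$ you need is the same one the paper's footnote uses.
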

\begin{proof}
From the general theory of optimal transport duality, it holds that
\begin{align*}
    \nabla_1 D_\phi(X,Y) = \nabla h(X),
\end{align*}
where $h$ is a $D_\phi$-concave function~\citep[see][Theorem 10.28]{villani2009ot}.\footnote{In fact, there are three assumptions for~\cite[Theorem 10.28]{villani2009ot}. Here, we explicitly check them one by one for clarity. (i) \emph{Super-differentiability:} $D_\phi$ is clearly differentiable on $\sQ$ as $\phi$ is of class $C^3$. (ii) \emph{Injectivity of gradient:} $\nabla_1 D_\phi(x,\cdot) = \nabla \phi(x) -\nabla \phi(\cdot) $ is injective as $\phi$ is of Legendre type. (iii) \emph{$\mu$-almost-sure differentiability of $D_\phi$-concave functions:} In \eqref{eq:cvxity}, we actually show that for any $D_\phi$-concave function $h$, $\phi-h$ is convex and thus differentiable Lebesgue a.e.~\citep[Theorem 25.5]{rockafellar1970convex}. Since $\mu$ is absolutely continuous w.r.t.\ Lebesgue measure and $\phi$ is differentiable, $h$ must be differentiable $\mu$-almost surely.}  
The left-hand side of this equation evaluates to $\nabla \phi(X) - \nabla \phi(Y)$, so we simply have to check that $D_\phi$-concavity of $h$ implies that $\phi - h$ is convex (which is in fact equivalent to saying that $h$ is $1$-relatively smooth with respect to $\phi$, see Appendix~\ref{app:relative}).

Recall that the $D_\phi$-concavity of $h$ means there exists a function $\tilde h : \R^d\to\R\cup\{-\infty\}$ such that
\begin{align*}
    h(x) = \inf_{y\in\R^d}\{D_\phi(x,y) - \tilde h(y)\},
\end{align*}
see~\citep[Definition 5.2]{villani2009ot}.\footnote{In Villani's book, he works with the definition of \emph{$c$-convexity} rather than \emph{$c$-concavity}, but this is merely a matter of convention; c.f.~\cite[\S 2.4]{villani2003topics} for the conventions regarding $c$-concavity.}
If we expand out the definition of the Bregman divergence, we can rewrite this as
\begin{align} \label{eq:cvxity}
    \phi(x) - h(x)
    &= \sup_{y\in\R^d}\{\inp{\nabla \phi(y)}{x-y} + \tilde h(y) + \phi(y)\}.
\end{align}
As a supremum of affine functions, we see that $\phi - h$ is convex, which completes the proof.
\end{proof}

We now prove Theorem~\ref{thm:ent_cvx_bregman} using Theorem~\ref{thm:bregman_brenier}:
\medskip{}
\begin{proof}[Proof of Theorem~\ref{thm:ent_cvx_bregman}]
By Theorem~\ref{thm:bregman_brenier}, the optimal Bregman transport coupling is of the form $\nabla \phi(Y) = \nabla(\phi - h)(X) = \nabla \zeta(X)$, where we have defined the convex function $\zeta := \phi - h$.
Hence, letting $\overline{\nu}$ denote the law of $\overline{Y} := \nabla\phi(Y)$, it follows that $(X, \nabla \zeta (X))$ is a $W_2$ optimal coupling between $\mu$ and $\overline{\nu}$.
Furthermore, since $\phi$ is a convex function of Legendre type, $(\nabla \zeta (X), \nabla\phi^\star \circ \nabla \zeta (X))$  is also a $W_2$ optimal coupling between $\overline{\nu}$ and $\nu$.
Noting that
\begin{align*}
    \nabla\phi^\star \circ \nabla \zeta (X) =\nabla\phi^\star \circ \nabla \phi (Y) = Y\,,
\end{align*}
it follows that $(X,Y)$ is a generalized geodesic according to $W_2$.
Therefore,  the convexity of $\hh$ along generalized geodesics~\citep[Theorem 9.4.11]{ambrosio2008gradient} concludes the proof (see~\cite[Lemma 4]{salim2020proximal}). 
\end{proof}

\begin{remark} 
For reader's convenience, we provide a direct calculation that (formally) shows the convexity result. 
Since we have shown $Y=\nabla\phi^\star \circ \nabla \zeta (X) $, the change of variable formula gives 
\begin{align*}
    \hh(\nu)
    = \int \nu(y) \ln \nu(y) \, \D y
    &= \int \mu(x) \ln \nu\bigl(\nabla \phi^\star \circ \nabla \zeta(x)\bigr) \, \D x \\
    &= \int \mu(x) \ln \frac{\mu(x)}{\det \nabla(\nabla \phi^\star \circ \nabla \zeta)(x)} \, \D x.
\end{align*}
(Here the change of variables is valid since $[\nabla(\nabla \phi^\star \circ \nabla \zeta)] (x) = [ \nabla^2 \phi^\star(\nabla \zeta(x))] \, [  \nabla^2 \zeta(x)]  \succeq 0$.)
Thus, using the convexity of $-\ln \det$ and integrating by parts, we obtain
\begin{align*}
    \hh(\mu) - \hh(\nu)
    &= -\int \mu(x) \ln \det \nabla(\nabla \phi^\star\circ \nabla\zeta)(x) \, \D x \\
    &\ge -\int \mu(x) \tr[\nabla(\nabla \phi^\star \circ \nabla \zeta)(x) - I_d] \, \D x
    = \int \langle \nabla \mu(x), (\nabla \phi^\star \circ \nabla\zeta)(x) - x \rangle \, \D x \\
    &= \int \langle \nabla \ln \mu(x), (\nabla \phi^\star \circ \nabla \zeta)(x) - x \rangle \, \D \mu(x)\,.
\end{align*}
Recalling that $\nabla_{W_2} \hh(\mu) = \nabla \ln \mu$ and $Y = \nabla \phi^\star(\nabla \zeta(X))$,   the convexity result follows.
\end{remark}

\section{Further applications}
\label{sec:further_appl}
\subsection{Details for the Bayesian logistic regression application}\label{sec:details_for_bayesian_logistic}

We may compute
\begin{align*}
    V(\theta)
    &= \sum_{i=1}^n \bigl(-Y_i \, \langle \theta, X_i \rangle + \ln(1+\exp{\langle \theta, X_i\rangle})\bigr), &\phi(\theta)
    &= \sum_{i=1}^d \Bigl( \ln \frac{1}{1-\theta[i]} + \ln \frac{1}{1+\theta[i]}\Bigr)\\
    \nabla V(\theta)
    &= -\sum_{i=1}^n \Bigl(Y_i - \frac{\exp{\langle \theta, X_i\rangle}}{1+\exp{\langle \theta, X_i\rangle}}\Bigr) \, X_i,  & \nabla \phi(\theta) &= \sum_{i=1}^d \Bigl( \frac{1}{1-\theta[i]} - \frac{1}{1+\theta[i]}\Bigr) e_i, \\
    \nabla^2 V(\theta)
    &= \sum_{i=1}^n \frac{\exp{\langle \theta, X_i\rangle}}{{(1+ \exp{\langle \theta, X_i\rangle})}^2} \, X_i X_i^\top, & \nabla^2 \phi(\theta) &= \diag\Bigl[ \frac{1}{{(1-\theta)}^2} + \frac{1}{{(1+\theta)}^2}\Bigr].
\end{align*}
From these expressions, we see that
\begin{align*}
    0 \preceq \nabla^2 V \preceq \sum_{i=1}^n X_i X_i^\top, \qquad 2I_d \preceq \nabla^2 \phi.
\end{align*}

Let $L := \sup_{{[-1,1]}^d}{\norm{\nabla V}}$ denote the (ordinary) Lipschitz constant of $V$, and let $\beta$ denote the (ordinary) smoothness parameter of $V$ (from above we see that $\beta$ can be taken to be the largest eigenvalue of $\sum_{i=1}^n X_i X_i^\top$). Note that the $2$-strong convexity of $\phi$ implies that $V$ is $L/\sqrt 2$-relatively Lipschitz and $\beta/2$-relatively smooth with respect to $\phi$, so Theorem~\ref{thm:w} holds with
\begin{align*}
    \beta'
    &= \frac{\beta}{2} + \sqrt 2 L.
\end{align*}
In order to fully understand the quantitative convergence rate provided by Theorem~\ref{thm:w}, we must also bound the Bregman divergence $\eu D_\phi(\pi,\mu_0)$.
We have:

\begin{lemma}\label{lem:blr_initialization}
Let $\mu_0 = \delta_0$ be the point mass at $0$. Then, for the logarithmic barrier mirror map $\phi$ defined above, we have $\wass\pi{\mu_0}
    \le 4.1 \, (1+\beta + L) \, d$. 
\end{lemma}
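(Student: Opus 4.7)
The plan is to reduce $\wass{\pi}{\mu_0}$ to a sum of one-dimensional expectations and then control those via a uniform pointwise bound on the marginal densities of $\pi$. Since $\mu_0=\delta_0$, every coupling is trivial and $\wass{\pi}{\mu_0} = \ex_{\theta\sim\pi}[\breg{\theta}{0}]$. From the formulas in the excerpt, $\phi(0)=0$ and $\nabla\phi(0)=0$, hence $\breg{\theta}{0} = \phi(\theta) = -\sum_{i=1}^d \ln(1-\theta[i]^2)$, and it suffices to bound $\ex_\pi[-\ln(1-\theta[i]^2)]$ for each coordinate $i$.

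The key step is a uniform bound on the marginal density $p_i$ of $\theta[i]$ under $\pi$. Writing $p_i(t) = q_i(t)/\int_{-1}^1 q_i(s)\,ds$ with $q_i(t):=\int_{[-1,1]^{d-1}} e^{-V(t,\theta_{-i})}\,d\theta_{-i}$, and using that $|\partial_i V(\theta)| \le \norm{\nabla V(\theta)} \le L$ on $[-1,1]^d$, one has the one-dimensional Lipschitz estimate $q_i(t) \le q_i(s)\,e^{L|t-s|}$. Integrating in $s$ yields
\[
\int_{-1}^1 q_i(s)\,ds \ge q_i(t)\int_{-1}^1 e^{-L|t-s|}\,ds \ge q_i(t)\cdot\frac{1-e^{-2L}}{L},
\]
so $p_i(t) \le L/(1-e^{-2L}) \le 1+L$ (the last inequality is an elementary check, valid for all $L\ge 0$). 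Combined with the finite one-dimensional integral $\int_{-1}^1 -\ln(1-t^2)\,dt = 4-4\ln 2 < 4.1$, one obtains
\[
\wass{\pi}{\mu_0} \le d\,(1+L)(4-4\ln 2) \le 4.1\,(1+L)\,d \le 4.1\,(1+\beta+L)\,d.
\]

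The main obstacle is the marginal bound itself: a naive global comparison of the densities of $\pi$ and the uniform prior on $[-1,1]^d$ via the oscillation of $V$ would incur a factor $\exp(L\cdot\mathrm{diam}) = e^{O(L\sqrt d)}$, ruining the linear-in-$d$ scaling. The argument above instead exploits only the one-dimensional Lipschitz constant of $V$ along each coordinate (which is itself $\le L$), producing a dimension-free pointwise bound on each $p_i$. The $\beta$ appearing in the statement is pure slack in this approach; one could replace $1+L$ by a sharper bound that genuinely involves $\beta$ through a quadratic Taylor expansion of $V$ at $0$ combined with convexity, but this is not needed to obtain the stated estimate.
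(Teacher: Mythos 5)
Your proof is correct, and it diverges from the paper's at the key step, the pointwise bound on the one-dimensional marginals of $\pi$. The paper (Lemma~\ref{lem:bd_on_marginal}) expands $V$ to second order around the mode of the marginal, using both the Lipschitz constant $L$ and the smoothness $\beta$, which produces a Gaussian-type lower bound on the marginal and yields $\sup_{[-1,1]}\pi_1 \le \tfrac{\sqrt e}{c}\,(1+\sqrt\beta+L)$ with $c=\int_0^1 e^{-x^2}\,\D x$; the extra factor $\tfrac32$ converting $\sqrt\beta$ into $\beta$ is what pushes the constant up to $4.1$. You instead exploit only the coordinate-wise Lipschitz estimate $|\partial_i V|\le L$ on the bounded slice $[-1,1]$, giving $q_i(t)\le q_i(s)e^{L|t-s|}$ and hence the dimension-free bound $p_i \le L/(1-e^{-2L}) \le 1+L$ (your elementary check $(1+L)e^{-2L}\le 1$ is fine, and the endpoint case $\int_{-1}^1 e^{-L|t-s|}\,\D s \ge (1-e^{-2L})/L$ holds since $(1-e^{-a})(1-e^{-b})\ge 0$ with $a=L(1-t)$, $b=L(1+t)$; at $L=0$ interpret the bound as the limit $\tfrac12$). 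Both arguments then reduce, exactly as in the paper's proof of Lemma~\ref{lem:blr_initialization}, to multiplying the marginal bound by $\int_{-1}^1 \ln\frac{1}{1-x^2}\,\D x = 4-4\ln 2$, using $\phi(0)=0$, $\nabla\phi(0)=0$ and the triviality of the coupling with $\delta_0$. What your route buys is a strictly sharper and simpler estimate, roughly $1.23\,(1+L)\,d$, with no dependence on $\beta$ at all (the $\beta$ in the statement is, as you say, slack); what the paper's route buys is a template that does not rely on the domain being a bounded interval in each coordinate, since the Gaussian decay from smoothness would still give an integrable bound on an unbounded slice, whereas your normalization step $\int_{-1}^1 e^{-L|t-s|}\,\D s$ uses the compactness of $[-1,1]$ in an essential way. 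For the lemma as stated, your argument is complete.
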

\begin{proof}
See Appendix~\ref{app:aux}.
\end{proof}

From Theorem~\ref{thm:w}, we can deduce that using $N$ iterations of~\ref{mla}, we can obtain a distribution $\mu^{\mathsf{MLA}}_{N}$ such that
\begin{align*}
    2\|\mu^{\mathsf{MLA}}_{N}-\pi\|_{\rm TV}^2 \le \kl{\mu^{\mathsf{MLA}}_{N}}{\pi} \le \epsilon, \qquad\text{provided}~N \ge \frac{23 {(1+\beta+ L)}^2 d^2}{\epsilon^2} \, \max\bigl\{1, \frac{\epsilon}{2d}\bigr\},
\end{align*}
where $\beta$ is the largest eigenvalue of $\sum_{i=1}^n X_i X_i^\top$ and $L := \sup_{{[-1, 1]}^d}{\norm{\nabla V}}$ is the usual Lipschitz constant of $V$; for details, see Appendix~\ref{sec:details_for_bayesian_logistic}. In fact, if we use the non-smooth guarantee in Theorem~\ref{thm:w:n}, then we can improve this to $O(L^2 d/\epsilon^2)$ iterations.
For comparison purposes, the guarantee for the projected Langevin algorithm (PLA)~\cite[Theorem 1]{bubeck2018sampling} with $R = \sqrt d$ implies
\begin{align*}
    \|\mu^{\mathsf{PLA}}_{N}-\pi\|_{\rm TV}^2 \le \epsilon, \qquad\text{provided}~N \ge \widetilde{\Omega}\Bigl( \frac{{(\sqrt d + \beta + L)}^{12} d^9}{\epsilon^6} \Bigr).
\end{align*}
On the other hand, the Moreau-Yosida unadjusted Langevin algorithm (MYULA)~\cite[Theorem 2]{brosse17a} with $R = \sqrt d$ provides the guarantee\footnote{To be precise, their bound on the number of iterates required reads $N \ge \widetilde{\Omega} ( \Delta_2^4d^7/\epsilon^3  )$, where $\Delta_2$ is a parameter measuring how close the domain $\dom(V)$ is to an isotropic convex body. For concreteness, we bound this parameter by $\beta R$ following~\cite[pg.\ 7]{brosse17a}.}
\begin{align*}
    \|\mu^{\mathsf{MYULA}}_{N}-\pi\|_{\rm TV}^2 \le \epsilon, \qquad\text{provided}~N \ge \widetilde{\Omega}\Bigl( \frac{\beta^4d^9}{\epsilon^3} \Bigr).
\end{align*}

\subsection{Better dimension dependency via mirror Langevin} \label{sec:dimreduc}

As described in~\citep[\S 4.3]{bubeck2015convex}, a classical application of mirror descent is to obtain better dependence on the dimension by changing the geometry of the optimization algorithm from $\ell_2$ to $\ell_1$. We investigate the possibility of analogous improvements in the setting of constrained sampling.

We consider a simple toy problem in which the constraint set is the interior of the filled-in simplex $\sQ := \{x\in\R^d \mid x > 0, \; \sum_{i=1}^d x[i] < 1\}$, and we take the potential to be a quadratic
\begin{align*}
    V(x) := \frac{1}{2} \langle x, Ax\rangle,
\end{align*} 
where $A \in \R^{d\times d}$ is a symmetric positive semidefinite matrix with all entries bounded in magnitude by $1$. We choose as our mirror map the barrier:
\begin{align*}
    \phi(x)
    &:= \sum_{i=1}^d \ln \frac{1}{x[i]} + \ln \frac{1}{1-\sum_{i=1}^d x[i]}.
\end{align*}
This map is self-concordant with parameter $1$.

We can compute
\begin{align*}
    \nabla \phi(x)
    &= \sum_{i=1}^d \Bigl(-\frac{1}{x[i]} + \frac{x[i]}{1-\sum_{j=1}^d x[j]} \Bigr) e_i, \\
    \nabla^2 \phi(x)
    &= \diag \frac{1}{x^2} + \frac{I_d}{1-\sum_{i=1}^d x[i]} + \frac{xx^\top}{{(1-\sum_{i=1}^d x[i])}^2}.
\end{align*}
Since $x[i]<1$ for all $i=1,\dotsc, d$, it follows that $\langle v, \diag(1/x^2) v\rangle \ge \langle v, \diag(1/x) v\rangle \ge \norm v_1^2$, where the second inequality follows from the strong convexity of the entropy with respect to the $\ell_1$-norm. 
Hence, $\phi$ is $1$-strongly convex with respect to the $\ell_1$-norm. 
From our assumption on $A$,
\begin{align*}
    \norm{\nabla V(x)}_{{[\nabla^2\phi(x)]}^{-1}}
    &\leq \norm{\nabla V(x)}_\infty \le 1, \\
    \langle v, \nabla^2 V(x) v\rangle
    &\le \Bigl\lvert \sum_{i,j=1}^d A_{i,j}v_iv_j\Bigr\rvert \le \sum_{i,j=1}^d |v_i||v_j| \le \norm v_1^2,
\end{align*}
which implies that $V$ is $1$-relatively Lipschitz and $1$-relatively smooth with respect to $\phi$, and the assumptions of Theorem~\ref{thm:w} hold with $\beta' = 3$. In contrast, if we had instead considered the $\ell_2$-norm, then the Lipschitz constant of $V$ could be as large as $\sqrt d$, and the smoothness parameter of $V$ could be as large as $d$. Together with a warm start, this suggests that~\ref{mla} could attain a better dimension dependence for this example.

\begin{remark}
Alternatively, we can apply Theorem~\ref{thm:w:n} with the entropic mirror map
\begin{align*}
    \phi(x)
    &= \sum_{i=1}^d x[i] \ln x[i] + \Bigl(1 - \sum_{i=1}^d x[i]\Bigr) \ln \Bigl(1 - \sum_{i=1}^d x[i]\Bigr)
\end{align*}
to the above setting; note that Theorem~\ref{thm:w:n} only requires standard assumptions for mirror descent guarantees (e.g., \cite[Theorem 4.2]{bubeck2015convex}), and does not require the mirror map to be self-concordant.
In particular, $V$ is $1$-Lipschitz w.r.t.\ $\norm{\cdot}_1$ and $\phi$ is strongly convex w.r.t.\ $\norm{\cdot}_1$, so Theorem~\ref{thm:w:n} implies that   $\kl{\avg_N}{\pi} \le \eps$  after $N = O\bigl(     \frac{\wass{\pi}{\mu_{1/2}}}{\eps^2}\bigr)$ iterations.
For comparison, note that the approach of Hsieh et al.~\cite{hsieh2018mirrored} does not apply to this example, because the pushforward of the distribution via the entropic mirror map is not log-concave.
\end{remark}

In Figure~\ref{fig:dim_reduction}, we report the results of a preliminary numerical study which indicates that~\ref{mla} may have an advantage in high dimension. In this example, we generate a $100\times 100$ matrix $\widetilde A$ with i.i.d.\ entries drawn uniformly from $[-1, 1]$, and we take $A$ to be the matrix $\widetilde A \widetilde A^\top$, rescaled so the largest magnitude of the entries is $1$. We compare the performance of~\ref{mla} (with step size $\st = 5 \times 10^{-3}$ and $10$ inner iterations of the Euler-Maruyama discretization for~\ref{mla2}) with PLA (taken with step size $\st = 10^{-6}$).
The step sizes were tuned to be as large as possible while avoiding instabilities in the algorithms. We plot the convergence in $W_2^2$, averaged over $10$ trials.

\begin{figure}[ht]
    \centering
    \includegraphics[clip, width=0.7\textwidth]{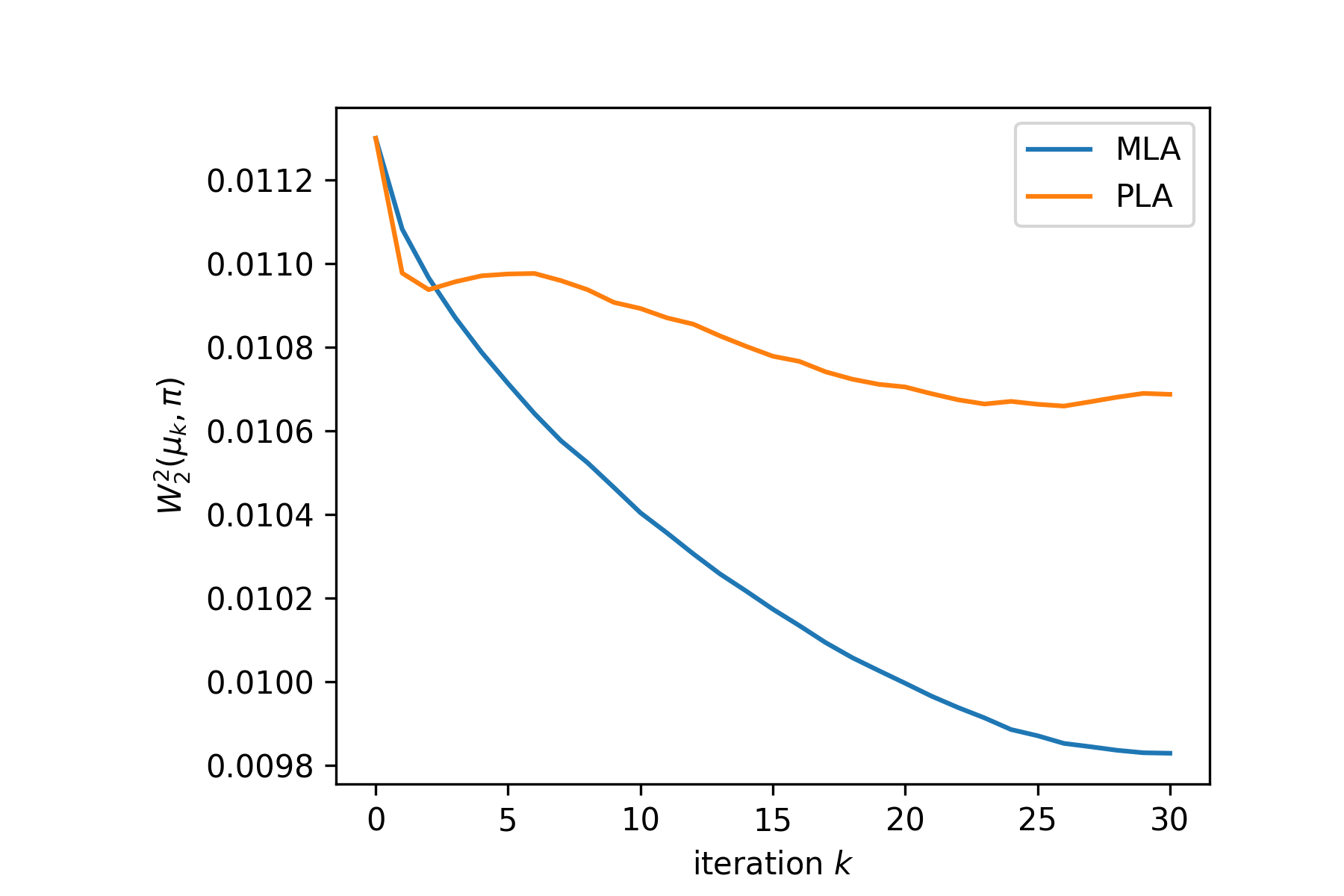}
    \caption{30 iterations of~\ref{mla} vs.\ PLA  for a $100$-dimensional example with step size $5 \times 10^{-3}$ for \ref{mla} and $10^{-6}$ for PLA. The step sizes were tuned to be as large as possible while avoiding instability. The results are averaged over $10$ runs to smooth out the curves.}
    \label{fig:dim_reduction}
\end{figure}

\subsection{Sampling from non-smooth distributions}

Thus far, we have focused on distributions whose potential $V$ is bounded within its domain $\dom(V)$.
However, in many applications, one is required to sample from a distribution whose potential $V$ blows up near the boundary of its domain.
Such distributions violate the standard assumptions of Lipschitzness and smoothness and hence are beyond the scope of the existing guarantees.
In this subsection, we demonstrate that one can still sample from such distributions via \ref{mla} together with the relative Lipschitzness and relative smoothness.

Consider the Dirichlet distribution $\pi$ which is defined on the interior of the filled-in simplex $\sQ := \{x\in\R^d \mid x > 0, \; \sum_{i=1}^d x[i] < 1\}$ by the potential
\begin{align*}
    V(x) = a_0 \ln\frac{1}{1-\sum_{i=1}^d x[i]} + \sum_{i=1}^d a_i \ln \frac{1}{x[i]},
\end{align*}
for some constants $a_0,a_1,\dots,a_d>0$, and we take $V = \phi$.
Then, it is well-known that $V$ is $(\max_{i=0,1,\dotsc,d} {a_i}^{-1/2})$-self-concordant~\citep[Theorem~5.1.1]{nesterov2018lectures}.
Also, from $(\sum_{i=0}^d a_i)$-exp-concavity of $V$~\citep[Theorem~5.3.2]{nesterov2018lectures}, it holds that $\norm{\nabla V(x)}_{{[\nabla^2 V(x)]}^{-1}} \leq {(\sum_{i=0}^d a_i)}^{1/2}$.
Therefore, it follows that $V$ is ${(\sum_{i=0}^d a_i)}^{1/2}$-Lipschitz, $1$-convex, and $1$-smooth relative to $V$, which implies that the assumptions of Theorem~\ref{thm:s} holds with
\begin{align*}
    \beta' = 1+ 2 \, \bigl(\max_{i=0,1,\dotsc,d}  {a_i}^{-1/2}\bigr) \,  \Bigl(\sum_{i=0}^d a_i\Bigr)^{1/2} \leq 3 \sqrt{d} \, \sqrt{\frac{a_{\mathrm{max}}}{a_{\mathrm{min}}}}\,,
\end{align*} 
where $a_{\mathrm{max}}:=\max_{i=0,1,\dotsc,d} a_i$ and $a_{\mathrm{max}}:=\min_{i=0,1,\dotsc,d} a_i$.
Therefore, one can obtain a mixture distribution $\overline\mu_N$ after $N$ iterations of \ref{mla}  such that
\begin{align*}
      \kl{\overline\mu_N}{\pi}  \leq \eps\,, \quad  \text{provided that }N \ge \widetilde{\Omega}\Bigl(\sqrt{\frac{a_{\mathrm{max}}}{a_{\mathrm{min}}}}\, \frac{d^{3/2}}{\eps}\Bigr).
\end{align*}

Using this example, we perform a numerical experiment to investigate the effect of simulating the diffusion step~\eqref{mla2} more faithfully. In Figure~\ref{fig:dirichlet}, we run~\ref{mla} for $30$ iterations with step size $0.005$. The potential $V$ and the mirror map $\phi$ are taken as above, with $a_0 = a_1 = \cdots = a_{10} = 2$. When implementing~\ref{mla2}, we use $k$ inner iterations of an Euler-Maruyama discretization, where $k$ ranges in $\{1, 5, 10, 20\}$, and the results are averaged over $10$ trials. We do indeed observe that a more precise implementation of~\ref{mla2} yields better results, although the difference is subtle.

\begin{figure}[ht]
    \centering
    \includegraphics[width=0.7\textwidth]{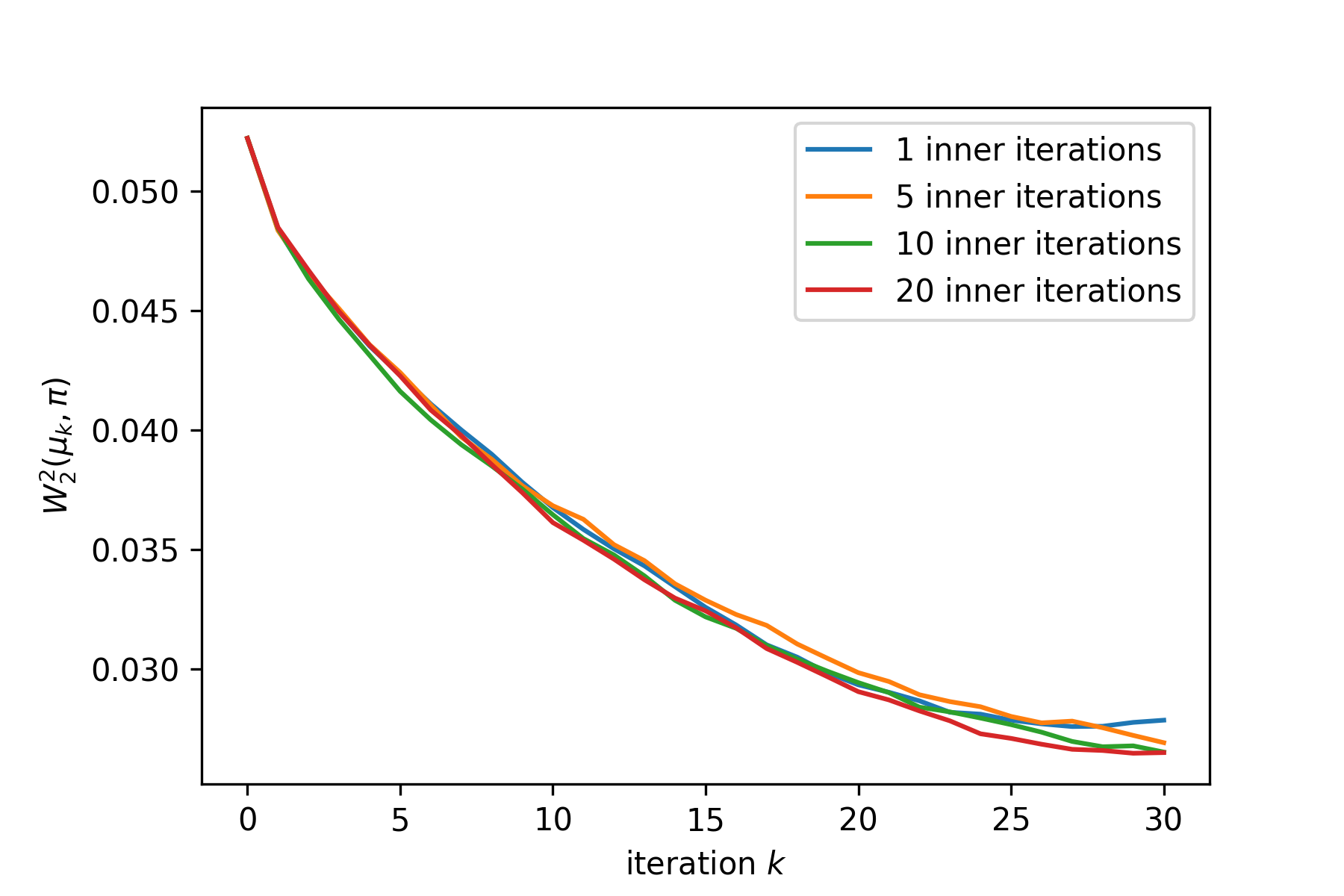}
    \caption{We investigate the effect of implementing~\ref{mla2} using $1$, $5$, $10$, or $20$ steps of an Euler-Maruyama discretization. A more faithful implementation of~\ref{mla2} appears to bring slight benefits.}
    \label{fig:dirichlet}
\end{figure}

\section{Auxiliary results}
\label{app:aux}
\begin{lemma}\label{lem:bd_on_marginal}
Let $\pi$ be a probability distribution supported on ${[-1, 1]}^d$ which has density proportional to $\exp(-V)$. Assume that $V : {[-1,1]}^d \to \R^d$ is $L$-Lipschitz and $\beta$-smooth.
Then, we have the following bound on the marginal density $\pi_1$ of $\pi$ on the first coordinate:
\begin{align*}
    \sup_{[-1, 1]} \pi_1
    &\le 3( 1 + \sqrt{\beta} + L).
\end{align*}
\end{lemma}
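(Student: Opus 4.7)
The plan is to express the marginal density as $\pi_1(t) = g(t)/\int_{-1}^1 g(s)\,ds$, where
\begin{align*}
    g(s) := \int_{[-1,1]^{d-1}} \exp\bigl(-V(s,y)\bigr) \, dy,
\end{align*}
and then to lower bound the denominator by a suitable constant multiple of $g(t)$. Concretely, it will suffice to show that $\int_{-1}^1 g(s)\,ds \ge g(t)/[3(1+\sqrt{\beta}+L)]$ for every $t \in [-1,1]$.

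To do this, I first use the one-dimensional consequences of the hypotheses on $V$: the $\beta$-smoothness gives $V(t+u, y) \le V(t,y) + \partial_1 V(t,y)\, u + \tfrac{\beta}{2} u^2$, and the $L$-Lipschitzness gives $|\partial_1 V(t,y)| \le L$. Exponentiating and applying the Lipschitz bound in the worst case yields the pointwise estimate $\exp(-V(t+u,y)) \ge \exp(-V(t,y))\, e^{-Lu - \beta u^2/2}$ for $u \ge 0$. Choose the bandwidth $h := \min\{1, (\sqrt{\beta}+L)^{-1}\}$; since $t \in [-1,1]$ and $h \le 1$, at least one of the intervals $[t, t+h]$ or $[t-h, t]$ is contained in $[-1,1]$, and I will integrate $g$ over that interval (the two cases being symmetric). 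Swapping the order of integration then produces
\begin{align*}
    \int_t^{t+h} g(s)\,ds \ge g(t) \cdot e^{-\beta h^2/2} \cdot \frac{1 - e^{-Lh}}{L}.
\end{align*}

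The final step is to verify that this factor is at least $[3(1+\sqrt\beta+L)]^{-1}$. The choice of $h$ enforces both $\beta h^2 \le 1$, so that $e^{-\beta h^2/2} \ge e^{-1/2}$, and $Lh \le 1$, so that concavity of $1 - e^{-x}$ on $[0,1]$ yields $(1-e^{-Lh})/L \ge (1 - e^{-1})\, h$. Multiplying gives $\int_{-1}^1 g(s)\,ds \ge (1-e^{-1})\, e^{-1/2}\, h\, g(t)$, and since $1/h = \max\{1, \sqrt\beta+L\} \le 1 + \sqrt\beta + L$, we obtain $\pi_1(t) \le e^{1/2}(1-e^{-1})^{-1}(1+\sqrt\beta+L) \approx 2.61\,(1+\sqrt\beta+L)$, which is comfortably below $3(1+\sqrt{\beta}+L)$.

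There is no genuine obstacle here — the main point is the choice of bandwidth $h$ that trades off the smoothness-induced penalty $e^{\beta h^2/2}$ against the Lipschitz-induced penalty $L/(1-e^{-Lh})$. Picking $h$ to equate the two regimes (roughly $h \sim (\sqrt\beta+L)^{-1}$) produces the exact scaling $1 + \sqrt\beta + L$ in the claim, and the remaining constants are absorbed by the factor of $3$.
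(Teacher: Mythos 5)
Your proof is correct and follows essentially the same route as the paper's: both rest on the one-dimensional bound $V(s,y)\le V(t,y)+L\,|s-t|+\tfrac{\beta}{2}\,{(s-t)}^2$ in the first coordinate, exponentiated and integrated over a window of width $\sim\min\{1,{(\sqrt\beta+L)}^{-1}\}$ to lower bound the normalizing mass relative to the marginal at a given point. The only differences are bookkeeping: the paper absorbs the linear term via AM--GM into a Gaussian and case-splits on $\beta+L^2$, whereas you keep the $e^{-Lu}$ factor exact and use a fixed bandwidth $h$, yielding a comparable constant ($\approx 2.6$ versus the paper's $\approx 2.2$), both within the stated factor of $3$.
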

\begin{proof}
Let $Z := \int_{{[-1,1]}^d} \exp(-V)$ denote the normalizing constant, let $\theta_1^\star \in [-1,1]$ be the maximizer of $\pi_1$, and let $\theta \in {[-1,1]}^d$.
We can write $\theta = (\theta_1, \theta_{-1})$, where $\theta_{-1} \in \R^{d-1}$.\footnote{In Section~\ref{scn:bayesian_logistic} we used the notation $\theta[i]$ for the $i$th coordinate of $\theta$, but for the sake of simplicity we switch to the  notation $\theta_i$ for this proof.} Then,
\begin{align*}
    V(\theta)
    &\le V(\theta^\star_1, \theta_{-1}) + \partial_1 V(\theta^\star_1, \theta_{-1}) \, (\theta_1 - \theta_1^\star) + \frac{\beta}{2} \, {(\theta_1 - \theta_1^\star)}^2 \\
    &\le V(\theta^\star_1, \theta_{-1}) + L \, |\theta_1 - \theta_1^\star| + \frac{\beta}{2} \, {(\theta_1 - \theta_1^\star)}^2 \\
    &\le \frac{1}{2} + V(\theta^\star_1, \theta_{-1}) + \frac{\beta + L^2}{2} \, {(\theta_1 - \theta_1^\star)}^2.
\end{align*}
This yields the lower bound
\begin{align*}
    \pi_1(\theta_1)
    &= \frac{1}{Z} \int_{{[-1,1]}^{d-1}} \exp\bigl(-V(\theta_1, \theta_{-1})\bigr) \, \D \theta_{-1} \\
    &\ge \frac{\exp[-(\beta + L^2) \, {(\theta_1 - \theta_1^\star)}^2/2 - 1/2]}{Z} \int_{{[-1,1]}^{d-1}} \exp\bigl(-V(\theta_1^\star, \theta_{-1})\bigr) \, \D \theta_{-1} \\
    &= \exp\biggl[ - \frac{1}{2} \, (\beta + L^2) \, {(\theta_1 - \theta^\star)}^2 - \frac{1}{2} \biggr] \sup_{[-1,1]} \pi_1.
\end{align*}
Next,
\begin{align*}
    1
    &= \int_{[-1, 1]} \pi_1(\theta_1) \, \D \theta_1
    \ge \frac{\sup_{[-1, 1]}\pi_1}{\sqrt{e}} \int_{[-1,1]} \exp\bigl[ - \frac{1}{2} \, (\beta + L^2) \, {(\theta_1 - \theta^\star)}^2 \bigr] \, \D \theta_1 \\
    &\ge \frac{\sup_{[-1, 1]}\pi_1}{\sqrt{e}} \int_0^1 \exp\bigl[ - \frac{1}{2} \, (\beta + L^2) \, x^2 \bigr] \, \D x.
\end{align*}
Let $c := \int_0^1 \exp(-x^2) \, \D x$.
By splitting into the two cases $\beta + L^2 \le 1$ and $\beta + L^2 \ge 1$, we can deduce the inequality
\begin{align*}
    1
    &\ge \frac{c\sup_{[-1, 1]}\pi_1}{\sqrt{e}} \, \bigl( \frac{1}{\sqrt{\beta + L^2}} \wedge 1\bigr).
\end{align*}
It yields
\begin{align*}
    \sup_{[-1,1]} \pi_1
    &\le \frac{\sqrt{e}}{c} \, \bigl(\sqrt{\beta + L^2} \vee 1\bigr)
    \le \frac{\sqrt{e}}{c} \, \bigl((\sqrt \beta + L) \vee 1\bigr)
    \le \frac{\sqrt{e}}{c} \,(1+\sqrt\beta + L),
\end{align*}
which is the result.
\end{proof}

\begin{proof}[Proof of Lemma~\ref{lem:blr_initialization}]
Let $\pi_i$ denote the $i$th marginal of $\pi$. Then,
since $\phi(0) = \nabla \phi(0) = 0$, we must estimate
\begin{align*}
    \wass\pi{\mu_0}
    &= \int_{{[-1, 1]^d}} \sum_{i=1}^d \ln \frac{1}{1-{\theta[i]}^2} \, \pi(\theta) \, \D \theta
    = \sum_{i=1}^d \int_{[-1, 1]} \ln \frac{1}{1-{\theta[i]}^2} \, \pi_i(\theta[i]) \, \D \theta[i] \\
    &\le C \, (1+\sqrt \beta + L) d \int_{[-1, 1]} \ln \frac{1}{1-x^2} \, \D x
    \le \frac{3}{2} C\, (1+ \beta + L) d \int_{[-1, 1]} \ln \frac{1}{1-x^2} \, \D x,
\end{align*}
where $C$ is the constant from the proof of Lemma~\ref{lem:bd_on_marginal}.
It yields the result.
\end{proof}

\end{document}